\newtheorem{Satz}{Theorem} 
\newtheorem{Lemm}{Lemma} 
\newtheorem{Def}{Definition} 
\newtheorem{remark}{Remark}
\newtheorem  {proposition}[Satz] {Proposition}
\newtheorem{Ass}{Assumption}
\newcommand \R   {\mathbb{R}}
\newcommand \K   {\mathcal{K}}
\newcommand \Kinf{\mathcal{K_\infty}}
\newcommand \PD  {\mathcal{P}}
\newcommand \KL  {\mathcal{KL}}
\newcommand \LL  {\mathcal{L}}
\newcommand \Uc   {\mathcal{U}}
\newcommand \Dc   {\mathcal{D}}
\newcommand \Iff   {\Leftrightarrow}
\newcommand \eps {\varepsilon}
\newcounter{syscounter}
\newenvironment{sysnum}{\begin{list}{($\Sigma{\arabic{syscounter}}$)}%
{\settowidth{\labelwidth}{($\Sigma4$)}
\settowidth{\leftmargin}{($\Sigma4$)~}%
\usecounter{syscounter}}}
{\end{list}}
\journal{Systems \& Control Letters}
\begin{document}

\begin{frontmatter}

\title{Lyapunov characterization of input-to-state stability for
  semilinear control systems over Banach spaces} 

\author[Pas]{\href{http://www.mironchenko.com/index.php/en/}{Andrii Mironchenko}}
\ead{andrii.mironchenko@uni-passau.de}

\author[Pas]{\href{https://scholar.google.de/citations?user=GPXp0s0AAAAJ&hl=uk}{Fabian Wirth}}
\ead{fabian.(lastname)@uni-passau.de}

\address[Pas]{Faculty of Computer Science and Mathematics, University of Passau,
Innstra\ss e 33, 94032 Passau, Germany }

\begin{abstract}
We prove that input-to-state stability (ISS) of nonlinear systems over Banach spaces is equivalent to existence of a coercive Lipschitz continuous ISS Lyapunov function for this system. For linear infinite-dimensional systems, we show that ISS is equivalent to existence of a non-coercive ISS Lyapunov function and provide two simpler constructions of coercive and non-coercive ISS Lyapunov functions for input-to-state stable linear systems.
\end{abstract}

\begin{keyword}
nonlinear control systems, infinite-dimensional systems, input-to-state stability, Lyapunov methods 
\end{keyword}

\end{frontmatter}




Input-to-state stability (ISS) was introduced by Sontag in his seminal paper \cite{Son89} and has since become a backbone of robust nonlinear control theory. Applications of ISS include robust stabilization of nonlinear systems \cite{FrK08}, design of nonlinear observers \cite{AnP09}, analysis of large-scale networks \cite{JTP94,DRW10} and numerous other branches of nonlinear control \cite{KoA01}.

The success of ISS theory of ordinary differential equations (ODEs) and the need
for  proper tools for robust stability analysis of partial differential
equations (PDEs) motivated the development of ISS theory in the infinite-dimensional
setting \cite{DaM13,MiI16,MaP11,JLR08,KaK16,KaJ11,Mir16}.

The two main lines of research within infinite-dimensional ISS theory 
are the development of a general ISS theory of evolution equations in Banach spaces
and the application of ISS to stability analysis and control of particular important PDEs.

The results in the first area include for instance small-gain theorems for
interconnected infinite-dimensional systems and their applications to
nonlinear interconnected parabolic PDEs over Sobolev spaces
\cite{DaM13,MiI15b} and characterizations of local and global ISS properties \cite{Mir16, MiW17b}. Within the second line of research, constructions of ISS Lyapunov functions for nonlinear parabolic systems over $L_p$-spaces \cite{MaP11}, for linear time-variant systems of conservation laws \cite{PrM12}, for nonlinear Kuramoto-Sivashinsky equation \cite{AVP16} have been investigated. Non-Lyapunov methods were successfully applied to linear parabolic systems with boundary disturbances in \cite{KaK16b}.

In this paper, we follow the first line of research and prove converse Lyapunov theorems for ISS of linear and semilinear evolution equations in Banach spaces.
For us the primary motivation comes from the papers \cite{LSW96, SoW95}, in which converse UGAS Lyapunov theorems
have been applied to prove, in the case of ODEs, the equivalence between
ISS and the existence of a smooth ISS
Lyapunov function. This result along with further restatements of ISS in
terms of other stability notions \cite{SoW95,SoW96} and small-gain
theorems \cite{JTP94, DRW10} is at the heart of ISS theory of systems of
ordinary differential equations.

In Section \ref{sec:input-state-stab} using the method from \cite{SoW95} and converse Lyapunov theorems for global asymptotic stability of systems with disturbances from 
\cite{KaJ11b} we prove that \textit{ISS is equivalent to the existence of a coercive, Lipschitz continuous ISS Lyapunov function}. Along the way, we show that ISS is equivalent to the existence of a globally stabilizing feedback which is robust to multiplicative actuator disturbances of bounded magnitude (weak uniform robust stability, WURS).

In Section~\ref{sec:LinSys} we provide simpler constructions of coercive and non-coercive ISS Lyapunov functions for linear infinite-dimensional systems with bounded input operators. In particular, we show that the existence of \textit{non-coercive ISS Lyapunov functions} is already sufficient for ISS of linear systems with bounded input operators.

Whether the existence of a non-coercive ISS Lyapunov function is sufficient for ISS of infinite-dimensional nonlinear systems is not completely clear right now, although some positive results based on non-Lyapunov characterizations of ISS property have been achieved in \cite{MiW17b}.
For systems without disturbances, it was shown in \cite{MiW16c} that non-coercive Lyapunov functions ensure uniform global asymptotic stability of the system, provided certain additional mild conditions hold.
Extension of these results to the systems with inputs is a challenging question for future research.

In Section~\ref{sec:conclusions} we conclude the results of the paper. Some of the results of this paper have been presented at 54th IEEE Conference on Decision and Control (CDC 2015) \cite{MiW15} and at 10th IFAC Symposium on Nonlinear Control Systems (NOLCOS 2016) \cite{MiW16b}.


Let $\R_+:=[0,\infty)$. For the formulation of stability properties the following classes of functions are useful:
\begin{equation*}
\begin{array}{ll}
 {\PD} &:= \left\{\gamma:\R_+\rightarrow\R_+\middle|\ \gamma\mbox{ is
       continuous}, \right. \\
 & \phantom{aaaaaaaaaaaaaaaaaaaaaa} \left. \gamma(r)=0 \Iff r=0  \right\}, \\
{\K} &:= \left\{\gamma \in \PD \left|\ \gamma \mbox{ is strictly increasing} \right. \right\}, \\
{\K_{\infty}}&:=\left\{\gamma\in\K\left|\ \gamma\mbox{ is unbounded}\right.\right\},\\
{\LL}&:=\left\{\gamma:\R_+\rightarrow\R_+\left|\ \gamma\mbox{ is continuous and strictly}\right.\right.\\
&\phantom{aaaaaaaaaaaaaa} \text{decreasing with } \lim\limits_{t\rightarrow\infty}\gamma(t)=0\},\\
{\KL} &:= \left\{\beta:\R_+\times\R_+\rightarrow\R_+\left|\ \beta \mbox{ is continuous,}\right.\right.,\\
&\phantom{aaaaaaa}\left.\beta(\cdot,t)\in{\K},\ \beta(r,\cdot)\in {\LL},\ \forall t\geq 0,\ \forall r >0\right\}. \\
\end{array}
\end{equation*}

For a normed space $X$, we denote the closed ball of radius
$r$ around $0$ by $\overline{B}_r$ or $\overline{B}_r^X$ if we want to
make the space clear.

Given normed space $X,W$, we call a function $f:X\to W$ locally Lipschitz
continuous, if for all $r>0$ there exists a constant $L_r$ such that 
\begin{equation*}
    \| f(x) - f(y) \|_W \leq L_r \|x -y\|_X \quad \forall x,y\in \overline{B}_r.
\end{equation*}
In the finite dimensional case, local Lipschitz continuity is sometimes
defined using neighborhoods of points, and in this case, this is of course
equivalent. Note that in the infinite-dimensional case it is necessary to
go to a definition on bounded balls as these are not compact. The
terminology we use here is consistent with \cite[p. 185]{Paz83}.  This
concept is called ``Lipschitz continuity on bounded balls'' in
\cite{CaH98}. 

\section{Input-to-state stability and weak uniform robust stability}
\label{sec:input-state-stab}

In this paper we consider infinite-dimensional systems of the form
\begin{equation}
\label{InfiniteDim}
\dot{x}(t)=Ax(t)+f(x(t),u(t)), \ x(t) \in X, u(t) \in U,
\end{equation}
where $A$ generates a strongly continuous semigroup of boun\-ded linear operators, $X$ is a Banach space and $U$ is a normed linear space of input values.
As the space of admissible inputs, we consider the space ${\cal U}$ of globally bounded, piecewise continuous functions from $\R_+$ to $U$.

%

In this paper we consider mild solutions of \eqref{InfiniteDim}, i.e. solutions of the integral equation
\begin{equation}
\label{InfiniteDim_Integral_Form}
x(t)=T_t x(0) + \int_0^t T_{t-s} f(x(s),u(s))ds 
\end{equation}
belonging to the class $C([0,\tau],X)$ for certain $\tau>0$. 
Here $\{T_t ,\ t \geq 0\}$ is the $C_0$-semigroup over $X$, generated by $A$.
For the notions from the theory of $C_0$-semigroups and its applications
to evolution equations we refer to \cite{CuZ95, CaH98}. In the sequel, we will write $\phi (t,x,u)$ to denote the solution corresponding to the initial condition $\phi(0,x,u)=x$ and the input $u\in \mathcal{U}$.

In the remainder of the paper we suppose that the nonlinearity $f$ satisfies the following assumption:
\begin{Ass}
\label{Assumption1}
Let $f:X \times U \to X$ be bi-Lipschitz continuous on bounded subsets, which means that two following properties hold:
\begin{enumerate}
    \item $\forall C>0 \; \exists L^1_f(C)>0$, such that $\forall x,y
          \in X$ with $\|x\|_X \leq C,\ \|y\|_X \leq C$ and $\forall v \in U$, it holds that
\begin{eqnarray}
\|f(x,v)-f(y,v)\|_X \leq L^1_f(C) \|x-y\|_X.
\label{eq:Lipschitz-1}
\end{eqnarray}
    \item $\forall C>0 \; \exists L^2_f(C)>0$, such that $\forall u,v
          \in U$ with $\|u\|_U \leq C,\ \|v\|_U \leq C$ and $\forall x \in X$, it holds that
\begin{eqnarray}
\|f(x,u)-f(x,v)\|_X \leq L^2_f(C) \|u-v\|_U.
\label{eq:Lipschitz-2}
\end{eqnarray}
\end{enumerate}
\end{Ass}

Due to standard arguments, Assumption~\ref{Assumption1} implies that mild solutions corresponding to any $x(0) \in X$ and any $u \in \Uc$ exist and are unique (actually, the second condition is too strong for mere existence and uniqueness, but we need it for the further development). 

We call the system forward complete, if for all initial conditions $x\in X$ and all
$u\in\mathcal{U}$ the solution exists on $\R_+$.

We treat $u$ as an external input, which may have significant influence on the dynamics of the
system. For the stability analysis of such systems a fundamental role is played by the concept of input-to-state stability, which unifies external and internal stability concepts.
\begin{Def}
\label{Def:ISS}
System \eqref{InfiniteDim} is called {\it input-to-state stable
(ISS)}, if it is forward complete and there exist $\beta \in \KL$ and $\gamma \in \K$ 
such that $\forall x \in X$, $\forall u\in \Uc$ and $\forall t\geq 0$ the following inequality holds
\begin {equation}
\label{iss_sum}
\| \phi(t,x,u) \|_{X} \leq \beta(\| x \|_{X},t) + \gamma( \|u\|_{\Uc}).
\end{equation}
\end{Def}

A key tool to study ISS is an ISS Lyapunov function. 
\begin{Def}
\label{def:ISS_LF}
A continuous function $V:X \to \R_+$  is called a non-coercive \textit{ISS
  Lyapunov function},  if $V(0) = 0$ and if there exist $\psi_2 \in \Kinf$, $\alpha \in \PD$ and $\chi \in \K$ so that
\begin{equation}
\label{LyapFunk_1Eig_noncoercive}
 0 < V(x) \leq \psi_2(\|x\|_X) \quad
\forall x \in X\setminus \{ 0 \}.
\end{equation}
and so that the Dini derivative of $V$ along the trajectories of the system
\eqref{InfiniteDim} satisfies the implication
\begin{equation}
\label{DissipationIneq}
\|x\|_X \geq \chi(\|u(0)\|_U) \quad \Rightarrow \quad \dot{V}_u(x) \leq -\alpha(\|x\|_X)
\end{equation}
for all $x \in X$ and $u\in \Uc$, where 
\begin{equation}
\label{UGAS_wrt_D_LyapAbleitung}
\dot{V}_u(x)=\mathop{\overline{\lim}} \limits_{t \rightarrow +0} {\frac{1}{t}\big(V(\phi(t,x,u))-V(x)\big) }.
\end{equation}
If, in addition, there exists $\psi_1 \in \Kinf$ such that
\begin{equation}
\label{LyapFunk_1Eig_LISS}
\psi_1(\|x\|_X) \leq V(x) \leq \psi_2(\|x\|_X) \quad \forall x \in X,
\end{equation}
then $V$ is called a coercive ISS Lyapunov function.
\end{Def}
In Definition~\ref{def:ISS_LF} we defined ISS Lyapunov function in the so-called implication form. For another (dissipative) definition of ISS Lyapunov functions and for the relation between these definitions please consult \cite{MiI16}.
We have the following result, see \cite[Theorem 1]{DaM13}.
\begin{proposition}
\label{Direct_LT_ISS_maxType}
If there exists a coercive ISS Lyapunov function for \eqref{InfiniteDim}, then \eqref{InfiniteDim} is ISS.
\end{proposition}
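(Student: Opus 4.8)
The plan is to derive the ISS estimate \eqref{iss_sum} from a scalar comparison argument for the function $t\mapsto V(\phi(t,x,u))$, after first securing forward completeness. I fix $x\in X$ and $u\in\Uc$, abbreviate $r:=\|u\|_\Uc$ and set $y(t):=V(\phi(t,x,u))$; this is continuous in $t$ since the mild solution lies in $C([0,\tau],X)$ and $V$ is continuous, which is all the regularity the Dini-derivative comparison will need.

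First I would convert the implication \eqref{DissipationIneq} into a differential inequality for $y$. By the cocycle identity $\phi(t+h,x,u)=\phi(h,\phi(t,x,u),u(\cdot+t))$ the upper right Dini derivative of $y$ coincides with $\dot V_{u(\cdot+t)}(\phi(t,x,u))$, whose input value at time $0$ is $u(t)$. Since $\chi\in\K$ and $\|u(t)\|_U\le r$, the bound $\|\phi(t,x,u)\|_X\ge\chi(r)$ forces $\|\phi(t,x,u)\|_X\ge\chi(\|u(t)\|_U)$, so \eqref{DissipationIneq} yields $\dot y(t)\le-\alpha(\|\phi(t,x,u)\|_X)$ there. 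The coercivity bounds \eqref{LyapFunk_1Eig_LISS} then close the inequality in $y$ alone: from $V\le\psi_2(\|\cdot\|_X)$ one gets $y(t)>\psi_2(\chi(r))\Rightarrow\|\phi(t,x,u)\|_X>\chi(r)$, so the dissipation is active on the layer $\{y>\psi_2(\chi(r))\}$, and because $\alpha\in\PD$ need not be monotone I minorize $\alpha(\|\phi\|_X)$ by $\hat\alpha(y):=\min\{\alpha(s):\psi_2^{-1}(y)\le s\le\psi_1^{-1}(y)\}\in\PD$, obtaining $\dot y(t)\le-\hat\alpha(y(t))$ whenever $y(t)>\psi_2(\chi(r))$.

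Next I would apply the comparison principle for positive-definite right-hand sides: the solutions of $\dot w=-\hat\alpha(w)$ are monotone and converge to $0$, hence assemble into a $\KL$ function $\beta_0$, and the threshold form of the comparison lemma gives $y(t)\le\max\{\beta_0(y(0),t),\,\psi_2(\chi(r))\}$ on the interval of existence. Pushing this back through \eqref{LyapFunk_1Eig_LISS} via $\|\phi(t,x,u)\|_X\le\psi_1^{-1}(y(t))$, together with $y(0)=V(x)\le\psi_2(\|x\|_X)$ and the estimate $\psi_1^{-1}(\max\{a,b\})\le\psi_1^{-1}(a)+\psi_1^{-1}(b)$, yields \eqref{iss_sum} with $\beta(s,t):=\psi_1^{-1}(\beta_0(\psi_2(s),t))\in\KL$ and $\gamma(r):=\psi_1^{-1}(\psi_2(\chi(r)))\in\K$.

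The step I expect to be the main obstacle is forward completeness, which the computation above tacitly assumes and which, unlike in the ODE case, must be argued with care for mild solutions. I would establish it first: Assumption~\ref{Assumption1} gives local existence and uniqueness, and the uniform a priori bound $\|\phi(t,x,u)\|_X\le\psi_1^{-1}(\max\{\psi_2(\|x\|_X),\psi_2(\chi(r))\})$ — valid on the maximal interval of existence by the same comparison argument — excludes finite-time blow-up, so a standard continuation argument extends every maximal solution to all of $\R_+$. The two finer points to watch are this continuation of mild solutions and the admissibility of the Dini-derivative comparison lemma for the merely continuous $y$; both are routine once set up, but they are exactly where the infinite-dimensional setting departs from the classical finite-dimensional proof.
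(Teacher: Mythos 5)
Your proof is correct, and it follows essentially the same route as the paper's source for this result: the paper does not prove Proposition~\ref{Direct_LT_ISS_maxType} itself but quotes it from \cite[Theorem 1]{DaM13}, and that proof is precisely your argument --- use the cocycle property to turn \eqref{DissipationIneq} into a Dini differential inequality for $y(t)=V(\phi(t,x,u))$, establish invariance of the sublevel set $\{V\le\psi_2(\chi(\|u\|_{\Uc}))\}$, apply a comparison principle to $y$ outside that set, and translate back through the coercivity bounds \eqref{LyapFunk_1Eig_LISS}. The two ``finer points'' you flag are indeed the genuinely infinite-dimensional ones and are handled as you suggest: forward completeness must be proved (Definition~\ref{Def:ISS} builds it into ISS, while Assumption~\ref{Assumption1} gives only local existence of mild solutions, so the a priori bound plus the blow-up alternative is the right argument), and the comparison lemma must be taken in the form valid for merely continuous functions with Dini derivatives, which is exactly the generalized comparison principle the authors rely on.
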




We intend to show that
\textit{
\begin{center}
ISS of \eqref{InfiniteDim} implies existence of a coercive, locally Lipschitz continuous Lyapunov function for \eqref{InfiniteDim}.
\end{center}
}
On this way we follow the method developed in \cite{SoW95} for systems described by ODEs.
In order to formalize the robust stability property of
\eqref{InfiniteDim}, we consider the problem of global stabilization of
\eqref{InfiniteDim} by means of feedback laws which are subject to
multiplicative disturbances with a magnitude bounded by $1$. To this end
let $\varphi:X \to \R_+$ be locally Lipschitz continuous and consider inputs
\begin{eqnarray}
u(t):=d(t)\varphi(x(t)), \quad t\geq 0,
\label{eq:Multiplicative_Feedbacks}
\end{eqnarray}
where $d \in \Dc := \{d:\R_+ \to D, \text{ piecewise continuous}\}$, $D :=\{d \in U: \|d\|_U \leq 1\}$.

Applying this feedback law to \eqref{InfiniteDim} we obtain the system 
\begin{eqnarray}
\label{eq:Modified_InfDimSys_With_Disturbances}
\dot{x}(t)&=&   Ax(t)+f(x(t),d(t)\varphi(x(t))) \nonumber\\
          &=:&  Ax(t) + g(x(t),d(t)).
\end{eqnarray}
Let us denote the solution of 
\eqref{eq:Modified_InfDimSys_With_Disturbances} at time $t$, starting at
$x \in X$ and with disturbance $d \in \Dc$ by
$\phi_{\varphi}(t,x,d)$.
On its interval of existence, $\phi_{\varphi}(t,x,d)$ coincides with
the solution of \eqref{InfiniteDim} for the input $u(t)=d(t)\varphi(x(t))$.
\footnote{Forward completeness of \eqref{InfiniteDim} does not imply forward completeness of \eqref{eq:Modified_InfDimSys_With_Disturbances}. For  example, consider $\dot{x} = -x + u$, $u(t) = d\cdot x^2(t)$ for $d >0$.}

\subsection{Basic properties of the closed-loop system}

The next lemma shows that $g$ in \eqref{eq:Modified_InfDimSys_With_Disturbances} is Lipschitz continuous.
\begin{Lemm}
\label{lem:Regularity_of_g}
Let $f$ be locally bi-Lipschitz continuous. 
Then $g$ is 
Lipschitz continuous on bounded subsets of $X$, uniformly with respect to the second argument, i.e.
$\forall C>0 \; \exists L_g(C)>0$, such that $\forall x,y \in \overline{B}_C$ and  $\forall d \in D$, it holds that
\begin{eqnarray}
\|g(x,d)-g(y,d)\|_X \leq L_g(C) \|x-y\|_X.
\label{eq:Lipschitz}
\end{eqnarray}
\end{Lemm}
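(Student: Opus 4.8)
The plan is to reduce the desired Lipschitz estimate for $g$ to the two Lipschitz properties of $f$ from Assumption~\ref{Assumption1}, by inserting an intermediate term and controlling the input argument $d\varphi(x)$ via the bound $\|d\|_U\le 1$ together with the local Lipschitz continuity of $\varphi$.

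First I would fix $C>0$ and $x,y\in\overline{B}_C$, and observe that $\varphi$, being locally Lipschitz continuous, is bounded on $\overline{B}_C$: with $L_\varphi(C)$ a Lipschitz constant for $\varphi$ on $\overline{B}_C$ and $M_C:=\varphi(0)+L_\varphi(C)\,C$, we get $0\le\varphi(z)\le M_C$ for all $z\in\overline{B}_C$. This is the one point where the infinite-dimensional setting matters: since closed balls are not compact, continuity alone would not deliver such a uniform bound, so I rely on the quantitative local Lipschitz estimate. Because $\varphi(z)$ is a nonnegative scalar and $\|d\|_U\le 1$, it follows that $\|d\varphi(z)\|_U=\varphi(z)\|d\|_U\le M_C$ for every $z\in\overline{B}_C$ and every $d\in D$; hence both feedback inputs $d\varphi(x)$ and $d\varphi(y)$ lie in $\overline{B}_{M_C}^U$.

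Next I would split the difference by the triangle inequality using the intermediate point $f(x,d\varphi(y))$:
\[
\|g(x,d)-g(y,d)\|_X \le \|f(x,d\varphi(x))-f(x,d\varphi(y))\|_X + \|f(x,d\varphi(y))-f(y,d\varphi(y))\|_X.
\]
The second summand is handled by the Lipschitz continuity of $f$ in its first argument \eqref{eq:Lipschitz-1}, yielding the bound $L^1_f(C)\|x-y\|_X$. For the first summand I would apply the Lipschitz continuity of $f$ in its second argument \eqref{eq:Lipschitz-2} with constant $L^2_f(M_C)$ (valid since both inputs lie in $\overline{B}_{M_C}^U$), obtaining $L^2_f(M_C)\|d\varphi(x)-d\varphi(y)\|_U = L^2_f(M_C)\|d\|_U\,|\varphi(x)-\varphi(y)|\le L^2_f(M_C)\,L_\varphi(C)\|x-y\|_X$, where the last step again uses $\|d\|_U\le 1$ and the local Lipschitz continuity of $\varphi$.

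Combining the two estimates gives
\[
\|g(x,d)-g(y,d)\|_X \le \bigl(L^1_f(C)+L^2_f(M_C)\,L_\varphi(C)\bigr)\|x-y\|_X,
\]
so setting $L_g(C):=L^1_f(C)+L^2_f(M_C)\,L_\varphi(C)$ establishes the claim. Crucially, none of these constants depends on $d$, which secures the asserted uniformity with respect to the second argument. The argument is otherwise routine; the only subtlety worth flagging is the boundedness of $\varphi$ on $\overline{B}_C$, for which the local Lipschitz hypothesis (rather than mere continuity) is exactly what makes the estimate go through in the Banach-space setting.
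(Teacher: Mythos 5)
Your proof is correct and takes essentially the same route as the paper's: a triangle-inequality split through an intermediate mixed term, Assumption~\ref{Assumption1} applied to each summand, and the boundedness plus Lipschitz continuity of $\varphi$ on $\overline{B}_C$ together with $\|d\|_U\le 1$ to close the estimate. The only (immaterial) difference is your choice of intermediate point $f(x,d\varphi(y))$ where the paper uses $f(y,d\varphi(x))$; you also track the constants slightly more explicitly than the paper does.
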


\begin{proof}
Pick an arbitrary $C>0$, any $x,y \in \overline{B}_C$, and any $d \in D$. It holds
\begin{multline*}
\|g(x,d)-g(y,d)\|_X = \|f(x,d\varphi(x)) - f(y,d\varphi(y))\|_X \\
\leq \|f(x,d\varphi(x)) - f(y,d\varphi(x))\|_X\\
 + \|f(y,d\varphi(x)) - f(y,d\varphi(y))\|_X.
\end{multline*}
Since $\varphi$ is Lipschitz continuous, it is bounded on $\overline{B}_C$
by a bound $R$. According to Assumption~\ref{Assumption1} and as
$\|d\|_{U}\leq 1$, we can upper bound the first summand by
$L_f^1(R)\|x-y\|_X$ and the second by $L_f^2(R) |\varphi(x)-\varphi(y)|$. The
claim now follows from the local Lipschitz continuity of $\varphi$.
\end{proof}

In particular, Lemma~\ref{lem:Regularity_of_g} shows that the system \eqref{eq:Modified_InfDimSys_With_Disturbances} is well-posed, i.e. its solution exists and is unique for any initial condition and any disturbance $d$.

\begin{remark}
Lipschitz continuous feedbacks do not necessarily lead to Lipschitz continuous $g$ if $f$ is not Lipschitz with respect to inputs. Consider e.g.
$\dot{x}(t) = (u(t))^{1/3}$ and $u(t) := x(t)$.
\end{remark}

\begin{Def}
\label{Def_RFC}
System \eqref{eq:Modified_InfDimSys_With_Disturbances} is called robustly forward complete (RFC) 
if for any $C>0$ and any $\tau>0$ it holds that 
\[
\sup_{\|x\|_X\leq C,\: d\in \Dc,\: t \in [0,\tau]}\|\phi_{\varphi}(t,x,d)\|_X < \infty.
\]
\end{Def}

\begin{Def}
\label{axiom:Lipschitz}
We say that the flow of \eqref{eq:Modified_InfDimSys_With_Disturbances} is Lipschitz continuous on compact intervals, if 
for any $\tau>0$ and any $R>0$ there exists $L>0$ so that for any $x,y \in
\overline{B}_R^X$, for all $t \in [0,\tau]$ and for all $d \in \Dc$ it holds that 
\begin{eqnarray}
\|\phi_{\varphi}(t,x,d) - \phi_{\varphi}(t,y,d) \|_X \leq L \|x-y\|_X.
\label{eq:Flow_is_Lipschitz}
\end{eqnarray}    
\end{Def}


We will need the following result, see \cite[Lemma 4.6]{MiW16c}, showing the regularity properties of the system \eqref{eq:Modified_InfDimSys_With_Disturbances}.
\begin{Lemm}
\label{lem:Regularity}
Assume that
\begin{enumerate}
    \item[(i)] \eqref{eq:Modified_InfDimSys_With_Disturbances} is robustly forward complete.
    \item[(ii)] $g$ is Lipschitz continuous on bounded subsets of $X$, uniformly w.r.t. the second argument.
\end{enumerate}
Then \eqref{eq:Modified_InfDimSys_With_Disturbances} has a flow which is Lipschitz continuous on compact intervals.
\end{Lemm}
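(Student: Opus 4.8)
The plan is to combine the variation-of-constants (mild solution) representation with a Gronwall argument, using robust forward completeness to control the Lipschitz constant of $g$ along trajectories. First I would fix $\tau>0$ and $R>0$. By hypothesis (i) and Definition~\ref{Def_RFC}, the quantity
\[
C:=\sup_{\|x\|_X\leq R,\: d\in\Dc,\: t\in[0,\tau]}\|\phi_{\varphi}(t,x,d)\|_X
\]
is finite. This is the crucial observation: it guarantees that for every initial value $x\in\overline{B}_R^X$ and every disturbance $d\in\Dc$, the entire trajectory $\phi_{\varphi}(\cdot,x,d)$ stays inside the fixed ball $\overline{B}_C^X$ on $[0,\tau]$, so that a single Lipschitz constant $L_g(C)$ from hypothesis (ii) applies simultaneously to all trajectories started in $\overline{B}_R^X$.

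Next I would record the growth bound of the semigroup: since $\{T_t\}$ is a $C_0$-semigroup, there are constants $M\geq 1$ and $\omega\in\R$ with $\|T_t\|\leq Me^{\omega t}$, and on the compact interval $[0,\tau]$ this gives a uniform bound $\|T_t\|\leq \overline{M}:=Me^{|\omega|\tau}$ for all $t\in[0,\tau]$. Then, for $x,y\in\overline{B}_R^X$ and any $d\in\Dc$, I would write the mild solution representation \eqref{InfiniteDim_Integral_Form} for the closed-loop right-hand side $g$, subtract the two integral equations, take norms, and invoke both the uniform semigroup bound and the Lipschitz estimate \eqref{eq:Lipschitz}, which is legitimate because both solutions lie in $\overline{B}_C^X$ and $d(s)\in D$. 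Setting $m(t):=\|\phi_{\varphi}(t,x,d)-\phi_{\varphi}(t,y,d)\|_X$, this yields
\[
m(t)\leq \overline{M}\,\|x-y\|_X+\overline{M}\,L_g(C)\int_0^t m(s)\,ds,\qquad t\in[0,\tau].
\]

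Since the mild solutions belong to $C([0,\tau],X)$, the function $m$ is continuous, so Gronwall's inequality applies and gives $m(t)\leq \overline{M}\,e^{\overline{M}L_g(C)\,t}\,\|x-y\|_X$. Hence the constant $L:=\overline{M}\,e^{\overline{M}L_g(C)\,\tau}$ satisfies \eqref{eq:Flow_is_Lipschitz} for all $t\in[0,\tau]$, all $x,y\in\overline{B}_R^X$, and all $d\in\Dc$, which is exactly the assertion. The main obstacle is not the Gronwall estimate, which is routine, but making the Lipschitz constant of $g$ \emph{uniform} along all trajectories under consideration: this is precisely where robust forward completeness is indispensable, since without the a priori bound $C$ the constant $L_g(\cdot)$ could grow unboundedly along trajectories and the Gronwall coefficient would fail to be controlled uniformly over $x,y\in\overline{B}_R^X$ and $d\in\Dc$.
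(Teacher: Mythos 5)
Your proof is correct. Note that the paper itself does not prove this lemma at all: it imports it verbatim as \cite[Lemma 4.6]{MiW16c}, so there is no internal argument to compare against. Your proof supplies exactly what that citation delegates, and it is the standard (and essentially the same as in the cited reference) argument: use robust forward completeness to trap all trajectories emanating from $\overline{B}_R^X$ in a single ball $\overline{B}_C^X$ so that one Lipschitz constant $L_g(C)$ applies uniformly, subtract the two mild-solution (variation-of-constants) representations for the same disturbance $d$, bound the semigroup on $[0,\tau]$, and close with Gronwall's inequality applied to the continuous function $t\mapsto\|\phi_{\varphi}(t,x,d)-\phi_{\varphi}(t,y,d)\|_X$; you also correctly identify the role of hypothesis (i) as making the Gronwall coefficient uniform over $x,y$ and $d$.
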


\begin{Def}
System \eqref{eq:Modified_InfDimSys_With_Disturbances} is called uniformly globally asymptotically
stable (UGAS)
if there exists a $\beta \in \KL$ such that 
\begin{equation}
\label{UGAS_wrt_D_estimate}
 d\in \Dc,\; x \in X,\; t \geq 0 \ \Rightarrow \ \|\phi_{\varphi}(t,x,d)\|_X \leq \beta(\|x\|_X,t).
\end{equation}
\end{Def}

UGAS can be characterized with the help of  uniform global attractivity.
\begin{Def}
\label{def:UniformGlobalAttractivity}
System \eqref{eq:Modified_InfDimSys_With_Disturbances} is called uniformly globally attractive (UGATT), if for any $r,\eps >0$ there exists $\tau=\tau(r,\eps)$ so that for all $d \in \Dc$ it holds that 
\begin{equation}
\|x\|_X \leq r, \; t \geq \tau(r,\eps) \quad \Rightarrow \quad \|\phi_{\varphi}(t,x,d)\|_X \leq \eps.
\label{eq:UAG_with_zero_gain}
\end{equation}
\end{Def}

\begin{Def}
\label{def:UniformGlobalStability}
System \eqref{eq:Modified_InfDimSys_With_Disturbances} is called uniformly globally stable (UGS), if there exists $\sigma \in \Kinf$ so that
\begin{equation}
d\in \Dc,\; x \in X,\; t \geq 0 \ \Rightarrow \ \|\phi_{\varphi}(t,x,d)\|_X \leq \sigma(\|x\|_X).
\label{eq:GS}
\end{equation}
\end{Def}

The following characterization of UGAS follows easily from \cite[Theorem 2.2]{KaJ11b}.
\begin{proposition}
\label{prop:UGAS_Characterization}
System \eqref{eq:Modified_InfDimSys_With_Disturbances} is UGAS if and only if \eqref{eq:Modified_InfDimSys_With_Disturbances} is  UGATT and UGS.
\end{proposition}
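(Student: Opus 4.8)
The plan is to prove the two implications separately, dispatching the forward direction by routine manipulations of $\KL$-functions and concentrating the real work on the converse.

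First I would show that UGAS implies both UGS and UGATT. Assume \eqref{eq:Modified_InfDimSys_With_Disturbances} is UGAS with some $\beta \in \KL$. For UGS, observe that $\|\phi_{\varphi}(t,x,d)\|_X \leq \beta(\|x\|_X,t) \leq \beta(\|x\|_X,0)$ for all $t \geq 0$; since $r \mapsto \beta(r,0)$ lies in $\K$, I would dominate it by some $\sigma \in \Kinf$ with $\sigma(r) \geq \beta(r,0)$, yielding \eqref{eq:GS}. For UGATT, fix $r,\eps > 0$; as $\beta(r,\cdot) \in \LL$ decreases to $0$, there is a time $\tau(r,\eps)$ with $\beta(r,t) \leq \eps$ for all $t \geq \tau(r,\eps)$, and monotonicity of $\beta$ in its first argument gives $\|\phi_{\varphi}(t,x,d)\|_X \leq \beta(\|x\|_X,t) \leq \beta(r,t) \leq \eps$ whenever $\|x\|_X \leq r$, which is \eqref{eq:UAG_with_zero_gain}. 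Since the $\KL$-bound is already uniform in $d$, both properties inherit their uniformity over $d \in \Dc$ for free.

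Second, for the converse I would invoke the abstract characterization \cite[Theorem 2.2]{KaJ11b} rather than rebuild the $\KL$-construction by hand. The key is to recognize \eqref{eq:Modified_InfDimSys_With_Disturbances} as a dynamical system with disturbances in the sense of that framework: the disturbance class $\Dc$ is closed under the time-shifts and concatenations required there, the flow $\phi_{\varphi}$ obeys the cocycle (semigroup) identity, and the system is well-posed with a state map that is continuous in the initial condition uniformly over $d$. For the last point I would appeal to the preceding development: Lemma~\ref{lem:Regularity_of_g} gives uniform Lipschitz continuity of $g$ on bounded sets, and together with robust forward completeness, Lemma~\ref{lem:Regularity} yields that the flow is Lipschitz continuous on compact intervals, hence the needed continuity and RFC axioms hold. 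With these structural hypotheses verified, Theorem~2.2 of \cite{KaJ11b} asserts precisely that the conjunction of UGS and UGATT is equivalent to the existence of a single $\beta \in \KL$ bounding every trajectory, i.e.\ UGAS.

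The main obstacle, and the reason this does not reduce to a one-line citation of finite-dimensional theory, is the assembly of one $\KL$-estimate from the two separate data streams (the $\Kinf$-bound $\sigma$ from UGS and the two-argument time function $\tau(r,\eps)$ from UGATT) while keeping everything uniform in $d \in \Dc$ over a non-compact state space. The classical recipe regularizes $\tau(r,\eps)$ into a function that is continuous and strictly monotone in each argument, inverts it to produce the decay profile, and patches it with $\sigma$ near $t=0$; the delicate feature in the infinite-dimensional setting is that closed balls in $X$ are not compact, so one cannot extract convergent subsequences of trajectories and must instead drive the argument purely through the uniform estimates. This is exactly the situation the framework of \cite{KaJ11b} is designed to accommodate, so I would delegate the combinatorial part of the proof to their Theorem~2.2 once the system axioms are checked.
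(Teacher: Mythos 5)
Your proposal is correct and takes essentially the same approach as the paper, whose entire justification is the remark that the proposition ``follows easily from'' \cite[Theorem 2.2]{KaJ11b}. The extra detail you supply---the elementary half (UGAS $\Rightarrow$ UGS and UGATT) and the verification that the closed-loop system \eqref{eq:Modified_InfDimSys_With_Disturbances} fits the axiomatic framework of \cite{KaJ11b} (with RFC coming from the UGS estimate, so that Lemmas~\ref{lem:Regularity_of_g} and~\ref{lem:Regularity} apply)---merely makes explicit what the paper leaves to the reader.
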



Coercive Lyapunov functions corresponding to UGAS property are defined as follows:
\begin{Def}
\label{def:UGAS_LF_With_Disturbances}
A continuous function $V:X \to \R_+$ is called a \textit{Lyapunov function} for \eqref{eq:Modified_InfDimSys_With_Disturbances},  if there exist
$\psi_1,\psi_2 \in \Kinf$ and $\alpha \in \Kinf$
such that 
\begin{equation}
\label{LyapFunk_1Eig_UGAS}
\psi_1(\|x\|_X) \leq V(x) \leq \psi_2(\|x\|_X) \quad \forall x \in X
\end{equation}
holds 
and Dini derivative of $V$ along the trajectories of the system \eqref{eq:Modified_InfDimSys_With_Disturbances} satisfies 
\begin{equation}
\label{DissipationIneq_UGAS_With_Disturbances}
\dot{V}_d(x) \leq -\alpha(\|x\|_X)
\end{equation}
for all $x \in X$, and all $d \in \Dc$.
\end{Def}

The following converse Lyapunov theorem will be crucial for our developments \cite[Section 3.4]{KaJ11b}:
\begin{Satz}
\label{LipschitzConverseLyapunovTheorem-1}
Let \eqref{eq:Modified_InfDimSys_With_Disturbances} be UGAS and let its
flow be Lip\-schitz continuous on compact intervals, then
\eqref{InfiniteDim} admits a locally Lipschitz continuous Lyapunov
function.
\end{Satz}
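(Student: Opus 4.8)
The plan is to reconstruct a converse Lyapunov function by the classical Sontag--Wang supremum construction, now applied to the disturbed closed-loop system \eqref{eq:Modified_InfDimSys_With_Disturbances}. Since the system is UGAS, I fix $\beta\in\KL$ as in \eqref{UGAS_wrt_D_estimate} and invoke Sontag's $\KL$-lemma to get $\theta_1,\theta_2\in\Kinf$ with $\beta(r,t)\le\theta_1(\theta_2(r)e^{-t})$, equivalently $\theta_1^{-1}(\beta(r,t))\le\theta_2(r)e^{-t}$, for all $r,t\ge0$. I would then define
\[
V(x):=\sup_{d\in\Dc,\,t\ge0}\theta_1^{-1}\big(\|\phi_{\varphi}(t,x,d)\|_X\big)\,e^{t/2}.
\]
The weight $e^{t/2}$ is chosen so that the guaranteed decay $\theta_1^{-1}(\|\phi_{\varphi}\|_X)\le\theta_2(\|x\|_X)e^{-t}$ still dominates it, while a strict decrease of $V$ is forced along trajectories.

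For the sandwich bounds, the term $t=0$ gives $V(x)\ge\theta_1^{-1}(\|x\|_X)=:\psi_1(\|x\|_X)$, and the UGAS estimate yields $\theta_1^{-1}(\|\phi_{\varphi}(t,x,d)\|_X)e^{t/2}\le\theta_2(\|x\|_X)e^{-t/2}\le\theta_2(\|x\|_X)$, so $V(x)\le\theta_2(\|x\|_X)=:\psi_2(\|x\|_X)$, with $\psi_1,\psi_2\in\Kinf$ and $V(0)=0$ since $0$ is an equilibrium. The dissipation inequality I would obtain from the cocycle property of mild solutions: writing $y=\phi_{\varphi}(s,x,d)$ and using $\phi_{\varphi}(t,y,d')=\phi_{\varphi}(t+s,x,\tilde d)$ for the concatenated disturbance $\tilde d$ that equals $d$ on $[0,s]$, the supremum defining $V(y)$ reparametrizes (with $\tau=t+s\ge s$) to give $V(\phi_{\varphi}(s,x,d))\le e^{-s/2}V(x)$ for every $s\ge0$ and every $d\in\Dc$. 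Passing to the Dini derivative \eqref{UGAS_wrt_D_LyapAbleitung} then yields $\dot V_d(x)\le-\tfrac12 V(x)\le-\tfrac12\psi_1(\|x\|_X)$, i.e.\ \eqref{DissipationIneq_UGAS_With_Disturbances} with $\alpha=\tfrac12\psi_1\in\Kinf$.

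The main work, and the step where the flow hypothesis is essential, is local Lipschitz continuity. The difficulty is that the supremum runs over the infinite horizon $t\in[0,\infty)$, whereas \eqref{eq:Flow_is_Lipschitz} only controls the flow on compact time intervals, with a constant $L(T,\cdot)$ that generally grows in $T$. To handle this I would truncate the horizon: fix $R>0$ and $r_0\in(0,R]$; by UGS every trajectory starting in $\overline{B}_R^X$ stays in $\overline{B}_{\sigma(R)}^X$, so the tail of the supremum over $t>T$ is bounded by $\theta_2(R)e^{-T/2}$, which can be made strictly smaller than $\psi_1(r_0)$ by choosing $T=T(r_0,R)$ large. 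For $x,y$ in the annulus $r_0\le\|x\|_X,\|y\|_X\le R$ the tail never reaches the value $V\ge\psi_1(r_0)$, so $V$ equals the supremum over the compact set $t\in[0,T]$. There the hypothesis \eqref{eq:Flow_is_Lipschitz} gives $\|\phi_{\varphi}(t,x,d)-\phi_{\varphi}(t,y,d)\|_X\le L\|x-y\|_X$ uniformly in $t\in[0,T]$ and $d\in\Dc$; combining this with the Lipschitz constant of $\theta_1^{-1}$ on $[0,\sigma(R)]$, the bound $e^{t/2}\le e^{T/2}$, and $|\sup a-\sup b|\le\sup|a-b|$ gives a Lipschitz estimate for $V$ on the annulus. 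Covering $X\setminus\{0\}$ by such annuli yields local Lipschitz continuity away from the origin, while the sandwich bounds give continuity at the origin.

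I expect this Lipschitz step to be the crux. The tension between the exponentially growing flow-Lipschitz constant and the unbounded horizon forces the truncation time $T(r_0,R)$, and hence the local Lipschitz constant, to blow up as $r_0\to0$, so the estimate is genuinely local away from the equilibrium; checking that continuity at the origin together with local Lipschitzness on $X\setminus\{0\}$ still delivers a Lyapunov function in the sense of Definition~\ref{def:UGAS_LF_With_Disturbances} is the delicate point that the regularity hypotheses (UGAS, which also gives robust forward completeness, and the Lipschitz flow on compact intervals supplied by Lemma~\ref{lem:Regularity}) are designed to resolve.
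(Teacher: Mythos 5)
Your strategy coincides with that of the proof the paper actually relies on: Theorem~\ref{LipschitzConverseLyapunovTheorem-1} is not proved in the paper itself but quoted from \cite[Section 3.4]{KaJ11b}, and the construction there is essentially the one you propose — Sontag's $\KL$-lemma, the weighted supremum $V(x)=\sup_{t\ge0,\,d\in\Dc}\theta_1^{-1}\big(\|\phi_{\varphi}(t,x,d)\|_X\big)e^{t/2}$, the sandwich bounds, the concatenation/cocycle argument giving $V(\phi_{\varphi}(s,x,d))\le e^{-s/2}V(x)$, and the truncation of the time horizon so that \eqref{eq:Flow_is_Lipschitz} can be applied. Within this scheme there is one minor gap: you invoke ``the Lipschitz constant of $\theta_1^{-1}$ on $[0,\sigma(R)]$'', but Sontag's lemma only provides $\theta_1\in\Kinf$, and a $\Kinf$ function need not have a locally Lipschitz inverse. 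This is easily repaired by replacing $\theta_1$ with $\theta_1+\identity$: the estimate $\beta(r,t)\le\theta_1(\theta_2(r)e^{-t})$ survives, and the new $\theta_1^{-1}$ is globally $1$-Lipschitz.

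The genuine gap is the last step. Your truncation argument yields Lipschitz continuity of $V$ only on annuli $\{x: r_0\le\|x\|_X\le R\}$, with a constant of order $e^{T/2}L(T,R)$, where $T=T(r_0,R)\ge 2\ln\big(\theta_2(R)/\theta_1^{-1}(r_0)\big)$; since the flow constant $L(T,R)$ in \eqref{eq:Flow_is_Lipschitz} in general grows exponentially in $T$, this constant blows up as $r_0\to0$. The paper's definition of local Lipschitz continuity, however, requires a uniform Lipschitz constant on every ball $\overline{B}_r$ — balls centered at the origin — and this stronger property is exactly what is used downstream (Theorem~\ref{ISS_Converse_Lyapunov_Theorem}(3) asserts a locally Lipschitz coercive ISS Lyapunov function in this sense). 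So what you prove is strictly weaker than the statement, and your closing remark that the hypotheses ``are designed to resolve'' the behaviour at the origin is not correct: UGAS together with the Lipschitz flow property does not remove the blow-up. What removes it, in \cite{KaJ11b} as in \cite{LSW96}, is an additional rescaling step that you omit: one passes to $W:=\rho\circ V$, where $\rho\in\Kinf$ is $C^1$ with $\rho'>0$ on $(0,\infty)$, $\rho'$ nondecreasing, $\rho'$ vanishing at $0$ fast enough that $\rho'(\psi_2(s))$ times the annular Lipschitz constant at level $s$ stays bounded as $s\to0$, and $\rho(\psi_2(s))\le Cs$ near zero, so that pairs with $\|y\|_X\le\|x\|_X/2$ (including $y=0$) are handled by the crude bound $|W(x)-W(y)|\le 2\rho(\psi_2(\|x\|_X))\le 4C\|x-y\|_X$. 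One must then re-verify that $W$ is still a Lyapunov function in the sense of Definition~\ref{def:UGAS_LF_With_Disturbances}: the coercive bounds become $\rho\circ\psi_1\le W\le\rho\circ\psi_2$, and the decay becomes $\dot{W}_d(x)\le-\tfrac12 V(x)\rho'(V(x))\le -\tfrac12\psi_1(\|x\|_X)\rho'(\psi_1(\|x\|_X))$, which has to be arranged to be of class $\Kinf$ by the choice of $\rho$. None of this is deep, but it is an essential missing piece of the argument, not a consequence of the hypotheses.
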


We will need the following property, which formalizes the robustness of \eqref{InfiniteDim} with respect to the feedback \eqref{eq:Multiplicative_Feedbacks}.
\begin{Def}
\label{def:WURS}
System \eqref{InfiniteDim} is called weakly uniformly robustly asymptotically
stable (WURS), if there exist a locally Lipschitz $\varphi:X \to \R_+$
 and $\psi \in \Kinf$
such that 
$\varphi(x) \geq \psi(\|x\|_X)$ and \eqref{eq:Modified_InfDimSys_With_Disturbances}
is uniformly globally asymptotically stable with respect to $\Dc$.
\end{Def}

The next proposition shows how the WURS property of system \eqref{InfiniteDim} reflects the regularity of the solutions of \eqref{eq:Modified_InfDimSys_With_Disturbances}.

\begin{proposition}
\label{prop:Flow_of_g}
Consider a forward complete system \eqref{InfiniteDim}.
Assume that
\begin{enumerate}
    \item[(i)] $f$ is bi-Lipschitz on bounded subsets  of $X$;
    \item[(ii)] \eqref{InfiniteDim} is WURS.
\end{enumerate}
Then for any $\varphi$ satisfying the conditions of
Definition~\ref{def:WURS}, the closed-loop system
\eqref{eq:Modified_InfDimSys_With_Disturbances} has a flow, which is Lipschitz continuous on compact intervals.
\end{proposition}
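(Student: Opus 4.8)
The plan is to verify the two hypotheses of Lemma~\ref{lem:Regularity} for the closed-loop system \eqref{eq:Modified_InfDimSys_With_Disturbances} and then invoke that lemma directly. First I would fix a $\varphi$ satisfying the conditions of Definition~\ref{def:WURS}. Assumption~(i) says $f$ is bi-Lipschitz on bounded subsets, and $\varphi$ is locally Lipschitz by the very choice dictated by Definition~\ref{def:WURS}, so Lemma~\ref{lem:Regularity_of_g} applies verbatim and yields that $g$ is Lipschitz continuous on bounded subsets of $X$, uniformly with respect to the second argument. This is precisely hypothesis~(ii) of Lemma~\ref{lem:Regularity}.

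It then remains to establish hypothesis~(i) of Lemma~\ref{lem:Regularity}, namely that \eqref{eq:Modified_InfDimSys_With_Disturbances} is robustly forward complete in the sense of Definition~\ref{Def_RFC}. Here I would use the WURS assumption~(ii): for the chosen $\varphi$ the closed-loop system is UGAS, hence there is $\beta \in \KL$ with $\|\phi_{\varphi}(t,x,d)\|_X \leq \beta(\|x\|_X,t)$ for all $x \in X$, all $d \in \Dc$ and all $t \geq 0$. Since $\beta(r,\cdot)$ is nonincreasing and $\beta(\cdot,0)$ is increasing, for every $C>0$ and every $\tau>0$ one obtains $\|\phi_{\varphi}(t,x,d)\|_X \leq \beta(C,0)$ whenever $\|x\|_X \leq C$, $d \in \Dc$ and $t \in [0,\tau]$. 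Taking the supremum over all such $x$, $d$ and $t$ shows that the quantity in Definition~\ref{Def_RFC} is bounded above by $\beta(C,0) < \infty$, so RFC holds. With both hypotheses in place, Lemma~\ref{lem:Regularity} delivers that the flow of \eqref{eq:Modified_InfDimSys_With_Disturbances} is Lipschitz continuous on compact intervals, which is exactly the claim.

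The argument is essentially a chaining of the two preceding lemmas, so I do not expect a serious obstacle. The one point that deserves care is the implicit content of the UGAS estimate: the $\KL$-bound holding for \emph{all} $t \geq 0$ simultaneously encodes forward completeness of the \emph{closed-loop} system and the uniform size bound needed for RFC. This is worth making explicit because, as the footnote to \eqref{eq:Modified_InfDimSys_With_Disturbances} warns, forward completeness of the open-loop system \eqref{InfiniteDim} does not by itself guarantee forward completeness of the closed loop; it is the WURS/UGAS hypothesis, not mere forward completeness of \eqref{InfiniteDim}, that supplies both ingredients.
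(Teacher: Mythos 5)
Your proposal is correct and follows essentially the same route as the paper's own proof: both verify the two hypotheses of Lemma~\ref{lem:Regularity} — obtaining robust forward completeness from the UGAS bound via $\sup\|\phi_{\varphi}(t,x,d)\|_X \leq \beta(C,0) < \infty$, and the uniform local Lipschitz property of $g$ from Assumption~(i) together with Lemma~\ref{lem:Regularity_of_g} — and then invoke that lemma. Your closing observation that the $\KL$-estimate, not mere forward completeness of \eqref{InfiniteDim}, supplies forward completeness of the closed loop is a point the paper also makes (implicitly) and is worth keeping explicit.
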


\begin{proof}
Since \eqref{InfiniteDim} is WURS and $\varphi$ is a stabilizing feedback
as required in Definition~\ref{def:WURS}, system
\eqref{eq:Modified_InfDimSys_With_Disturbances} is forward complete and
UGAS. Let $\beta\in{\cal KL}$ be a bound as in \eqref{UGAS_wrt_D_estimate}. Then, for any $C>0$ and any $\tau>0$ 
\[
\sup_{\|x\|_X\leq C,\ d\in \Dc,\ t \in [0,\tau]}\|\phi_{\varphi}(t,x,d)\|_X \leq \beta(C,0) < \infty.
\]

Assumption (i) together with Lemma~\ref{lem:Regularity_of_g} imply that
$g$ is locally Lipschitz continuous
uniformly in the second argument. Thus, all assumptions of Lemma~\ref{lem:Regularity} are satisfied, and the claim follows. 
\end{proof}

\subsection{Main result}

The objective of this paper is to prove that for system \eqref{InfiniteDim} (at least with bi-Lipschitz nonlinearities) the notions depicted in Figure~\ref{ISS_CLT} are equivalent.

\begin{remark}
    The reader familiar with the results in \cite{SoW95} will notice that our assumptions on the dependence on $u$ are stronger than in the finite-dimensional case. For system \eqref{InfiniteDim} we need to ensure existence of solutions if a feedback is applied. 
    In the finite-dimensional case, it is sufficient to assume continuity by Peano's theorem. This guarantees existence but not uniqueness, but for the stability arguments, this is not a major drawback. For system \eqref{InfiniteDim}
    continuity is in general not sufficient for the existence of solutions \cite{God75, HaJ10}.
\end{remark}

\begin{figure}[tbh]
\centering
\begin{tikzpicture}[>=implies,thick]
\node (ISS) at (1,5) {(1) is ISS};
\node (WURS) at   (1,3) {(1) is WURS};
\node (WURSLF) at (6,3) {$\exists$ LF for (11)};
\node (ISSLF) at (6,5) {$\exists$ ISS-LF for (1)};

\node (Lem4) at (0,4) {\scriptsize Lemma 4};
\node (The3) at (3.5,2.7) {\scriptsize Theorem 3};
\node (Lem5) at (7,4) {\scriptsize Lemma 5};
\node (Prop1) at (3.4,5.3) {\scriptsize Proposition 1};

\draw[thick,double equal sign distance,->] (ISS) to (WURS);
\draw[thick,double equal sign distance,->] (WURS) to (WURSLF);
\draw[thick,double equal sign distance,->] (WURSLF) to (ISSLF);
\draw[thick,double equal sign distance,->] (ISSLF) to (ISS);

\end{tikzpicture}
\caption{ISS Converse Lyapunov Theorem}
\label{ISS_CLT}
\end{figure}
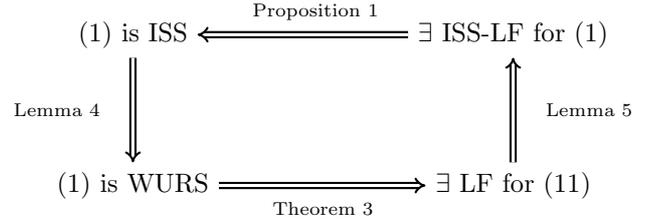
            
First, we show in Lemma~\ref{lem:ISS_implies_WURS} that ISS implies WURS. 
Next, we apply Theorem~\ref{LipschitzConverseLyapunovTheorem-1} to prove that WURS of \eqref{InfiniteDim} implies the existence of a Lipschitz continuous coercive ISS Lyapunov function for \eqref{InfiniteDim}.
Finally, the direct Lyapunov theorem
(Proposition~\ref{Direct_LT_ISS_maxType}) completes the proof.

\begin{Lemm}
\label{lem:ISS_implies_WURS}
If \eqref{InfiniteDim} is ISS, then \eqref{InfiniteDim} is WURS.
\end{Lemm}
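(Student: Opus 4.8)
The plan is to follow the Sontag--Wang construction, producing a single locally Lipschitz feedback $\varphi$ carrying a $\Kinf$ lower bound that renders the closed loop \eqref{eq:Modified_InfDimSys_With_Disturbances} UGAS uniformly over all $d\in\Dc$. By Proposition~\ref{prop:UGAS_Characterization} it then suffices to check UGS and UGATT separately and invoke Definition~\ref{def:WURS}.

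First I would build the feedback. Without loss of generality the gain in \eqref{iss_sum} may be taken in $\Kinf$ (replace $\gamma$ by a larger $\Kinf$ majorant, which preserves the estimate). I then pick a locally Lipschitz $\psi\in\Kinf$ lying below $r\mapsto\gamma^{-1}(r/2)$, so that $\gamma(\psi(r))\le r/2$ for all $r\ge 0$, and set $\varphi(x):=\psi(\|x\|_X)$. Since the norm is $1$-Lipschitz and $\psi$ is locally Lipschitz, $\varphi$ is locally Lipschitz, and $\varphi(x)\ge\psi(\|x\|_X)$ holds with equality, so $(\varphi,\psi)$ are the candidates required by Definition~\ref{def:WURS}. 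The purpose of the choice is that along any solution $\|d(t)\varphi(\phi_{\varphi}(t,x,d))\|_U\le\varphi(\phi_{\varphi}(t,x,d))$, because $\|d(t)\|_U\le 1$, so the input fed back into \eqref{iss_sum} has $\Uc$-norm at most $\psi\big(\sup_{t}\|\phi_{\varphi}(t,x,d)\|_X\big)$, and the small-gain factor $1/2$ will absorb it.

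Second, I would establish forward completeness and UGS together. On its maximal interval of existence $\phi_{\varphi}(\cdot,x,d)$ coincides with the solution of \eqref{InfiniteDim} for the input $u(t)=d(t)\varphi(\phi_{\varphi}(t,x,d))$, and by causality the ISS estimate \eqref{iss_sum} applies on each finite subinterval. Writing $M(t'):=\sup_{s\le t'}\|\phi_{\varphi}(s,x,d)\|_X$, this yields $M(t')\le\beta(\|x\|_X,0)+\gamma(\psi(M(t')))\le\beta(\|x\|_X,0)+\tfrac12 M(t')$, hence $M(t')\le 2\beta(\|x\|_X,0)$ uniformly in $t'$ and $d$. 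This a priori bound rules out blow-up, so by the continuation property for mild solutions with $g$ Lipschitz on bounded sets (Lemma~\ref{lem:Regularity_of_g}) the solution is global; the same bound gives \eqref{eq:GS} with a $\Kinf$ majorant $\sigma$ of $2\beta(\cdot,0)$, i.e.\ UGS. Next, for the attractivity, I set $m_{x,d}(s):=\sup_{\tau\ge s}\|\phi_{\varphi}(\tau,x,d)\|_X$, which is finite and non-increasing by UGS. Restarting \eqref{iss_sum} at time $s$ and again using $\gamma(\psi(\cdot))\le\tfrac12(\cdot)$ gives, for $s'\ge s$, the recursion $m_{x,d}(s')\le\beta(m_{x,d}(s),s'-s)+\tfrac12 m_{x,d}(s)$. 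Taking the supremum over $\|x\|_X\le r$ and $d\in\Dc$ produces a single non-increasing envelope $\mu(s)$ with $\mu(0)\le\sigma(r)$ satisfying $\mu(s')\le\beta(\mu(s),s'-s)+\tfrac12\mu(s)$. Sending $s\to\infty$ with $s'-s$ fixed and then letting $s'-s\to\infty$ forces the limit $\mu_\infty$ to satisfy $\mu_\infty\le\tfrac12\mu_\infty$, so $\mu_\infty=0$; thus for each $\eps>0$ there is $\tau(r,\eps)$ with $\mu(\tau(r,\eps))\le\eps$, which is precisely UGATT. Proposition~\ref{prop:UGAS_Characterization} then yields UGAS of the closed loop, and together with the feedback of the first step this establishes WURS.

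I expect the main obstacle to be the attractivity step: making the decay \emph{uniform} over all initial states in a ball and all disturbances, rather than merely pointwise. Majorizing the family $\{m_{x,d}\}$ by the single monotone envelope $\mu$ and exploiting the strict small-gain factor $1/2$ is what makes the uniformity go through. A secondary technical point, flagged already by the footnote after \eqref{eq:Modified_InfDimSys_With_Disturbances}, is that global existence of the closed-loop solutions must be argued from the a priori bound, since forward completeness of \eqref{InfiniteDim} does not transfer automatically to \eqref{eq:Modified_InfDimSys_With_Disturbances}.
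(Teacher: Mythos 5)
Your proof is correct, and it follows the same overall plan as the paper's: build a locally Lipschitz feedback $\varphi(x)=\psi(\|x\|_X)$ whose magnitude is small-gain--compatible with $\gamma$, verify UGS and UGATT for the closed loop, and invoke Proposition~\ref{prop:UGAS_Characterization}. The execution of both estimates, however, differs from the paper in a way worth noting. The paper calibrates the feedback against the \emph{initial} state (via $\sigma(r)\le\gamma^{-1}\big(\tfrac14\alpha^{-1}(\tfrac23 r)\big)$ with $\alpha(r)=\beta(r,0)$) and must then prove the pointwise bound \eqref{eq:ISS_implies_WURS_Estimate} by a contradiction argument at the first crossing time $t^*$; you instead calibrate against the \emph{current} state ($\gamma(\psi(r))\le r/2$), which lets the gain term be absorbed directly into the running supremum $M(t')$ --- no contradiction argument and no bespoke constants are needed, and the absorption step simultaneously delivers the a priori bound that yields forward completeness, a point the paper's footnote flags but its proof leaves implicit. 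For attractivity, the paper runs a geometric-decay induction (after explicit times $t_k$ the norm is below $(\tfrac34)^k\|x\|_X$), which is more quantitative: it produces concrete decay times. Your monotone envelope $\mu(s)$ with the recursion $\mu(s')\le\beta(\mu(s),s'-s)+\tfrac12\mu(s)$ and the limit argument $\mu_\infty\le\tfrac12\mu_\infty$ is softer and non-constructive, but it handles the required uniformity over $\overline{B}_r$ and $\Dc$ in one stroke and avoids the induction bookkeeping. Both restarting arguments rely on the cocycle property and causality in the same way, so the two proofs are interchangeable; yours trades explicit decay rates for a shorter, cleaner small-gain argument.
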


\begin{proof}
The proof goes along the lines of \cite[Lemma 2.12]{SoW95}.

Let \eqref{InfiniteDim} be ISS. In order to prove that \eqref{InfiniteDim} is WURS we are going to use Proposition~\ref{prop:UGAS_Characterization}. 

Since \eqref{InfiniteDim} is ISS, there exist $\beta \in \KL$ and $\gamma \in \Kinf$ so that \eqref{iss_sum} holds for any $t \geq 0$, $x \in X$, $u \in \Uc$.
Define $\alpha(r):=\beta(r,0)$, for $r \in \R_+$. Substituting $u \equiv 0$ and $t=0$ into \eqref{iss_sum} we see that $\alpha(r) \geq r$ for all $r \in \R_+$.

Pick any $\sigma \in \Kinf$ so that $\sigma (r) \leq \gamma^{-1}\big(\frac{1}{4} \alpha^{-1}(\frac{2}{3}r)\big)$ for all $r\geq 0$.
We may choose locally Lipschitz continuous maps $\varphi:X\to \R_+$ and $\psi \in \Kinf$ such
that  $\psi(\|x\|_X) \leq \varphi(x) \leq \sigma(\|x\|_X)$
(just pick a locally Lipschitz continuous $\psi \in \Kinf$ and set
$\varphi(x):=\psi(\|x\|_X)$ for all $x \in X$, which guarantees that
$\varphi$ is locally Lipschitz continuous).

We are going to show that for all $x \in X$, all $t \geq 0$ and all $d \in \Dc$ it holds that
\begin{eqnarray}
\gamma\Big(\big\|d(t) \varphi(\phi_{\varphi}(t,x,d))\big\|_U\Big) \leq \frac{\|x\|_X}{2}. 
\label{eq:ISS_implies_WURS_Estimate}
\end{eqnarray}
First we show that \eqref{eq:ISS_implies_WURS_Estimate} holds for all
times $t\geq 0$ small enough. 
Since $\alpha^{-1}(r) \leq r$ for all $r >0$, we have
\begin{eqnarray*}
\hspace{-7mm}
\gamma\Big(\big\|d(t) \varphi(\phi_{\varphi}(t,x,d))\big\|_U\Big) &\leq& \gamma\big(\sigma(\|\phi_{\varphi}(t,x,d)\|_X)\big) \\
&\leq& \frac{1}{4} \alpha^{-1}\Big(\frac{2}{3}\|\phi_{\varphi}(t,x,d)\|_X\Big) \\
&\leq& \frac{1}{6} \|\phi_{\varphi}(t,x,d)\|_X.
\end{eqnarray*}
For any $d \in \Dc$ and any $x \in X$ the latter expression can be made smaller than 
$\frac{1}{2} \|x\|_X$ by choosing $t$ small enough, since $\phi_{\varphi}$
is continuous in $t$.

Now pick any $d \in \Dc$, $x \in X$ and define $t^* = t^*(x,d)$ by
\begin{equation*}
t^*:= \inf \left\{t \geq 0: \gamma\Big(\|d(t)\|_U \big|\varphi(\phi_{\varphi}(t,x,d))\big|\Big) > \frac{\|x\|_X}{2}\right\}.
\end{equation*}
By the first step we know $t^*>0$.
Assume that $t^*< \infty$ (otherwise our claim is true).
Then \eqref{eq:ISS_implies_WURS_Estimate} holds for all $t \in [0,t^*)$.
Thus, for all $t \in [0,t^*)$ it holds that 
\begin{eqnarray*}
\|\phi_{\varphi}(t,x,d)\|_X &\leq& \beta(\|x\|_X,t) + \frac{\|x\|_X}{2} \\
&\leq& \beta(\|x\|_X,0) + \frac{1}{2} \alpha(\|x\|_X) \\
& = & \frac{3}{2} \alpha(\|x\|_X).
\end{eqnarray*}
Using this estimate we find out that
\begin{align*}
\hspace{-10mm}
\gamma\Big(\|d(t^*)\|_U \big|\varphi(\phi_{\varphi}(t^*,x,d))\big|\Big) 
&\leq 
\frac{1}{4} \alpha^{-1}\Big(\frac{2}{3}\|\phi_{\varphi}(t^*,x,d)\|_X\Big)  \\
&\leq
\frac{1}{4} \alpha^{-1}(\|x\|_X)  \\
&\leq
\frac{1}{4} \|x\|_X.
\end{align*}
But this contradicts the definition of $t^*$.
Thus, $t^* = +\infty$.

Now we see that for any $x \in X$, any $d \in \Dc$ and all $t \geq 0$ we have 
\begin{equation}
\label{eq:Helps_for_RFC}
\|\phi_{\varphi}(t,x,d)\|_X \leq \beta(\|x\|_X,t) + \frac{\|x\|_X}{2},
\end{equation}
which shows uniform global stability of \eqref{eq:Modified_InfDimSys_With_Disturbances}.

\makebox[\linewidth][s]{Since $\beta \in \KL$, there exists a $t_1=t_1(\|x\|_X)$ so that } \\
$\beta(\|x\|_X,t_1) \leq \frac{\|x\|_X}{4}$ and consequently
\begin{equation*}
 d\in \Dc,\; x \in X,\; t \geq 0 \quad \Rightarrow \quad \|\phi_{\varphi}(t,x,d)\|_X \leq \frac{3}{4} \|x\|_X.
\end{equation*}

By induction we obtain that there exists a strictly increasing sequence of times $\{t_k\}_{k=1}^{\infty}$, which depends on the norm of $\|x\|_X$ but is independent of $x$ and $d$ so that
\begin{equation*}
\|\phi_{\varphi}(t,x,d)\|_X \leq \Big(\frac{3}{4} \Big)^{k} \|x\|_X,
\end{equation*}
for all $x \in X$, any $d \in \Dc$ and all $t \geq t_k$.

This means that for all $\eps>0$ and for all $\delta >0$ there exist a time $\tau=\tau(\delta)$ so that 
for all $x \in X$ with $\|x\|_X \leq \delta$, for all $d \in \Dc$ and for all $t \geq \tau$ we have
\[
\|\phi_{\varphi}(t,x,d)\|_X \leq \eps.
\]
This shows uniform global attractivity of \eqref{eq:Modified_InfDimSys_With_Disturbances}.

Now we are ready to apply Proposition~\ref{prop:UGAS_Characterization}, which shows that \eqref{eq:Modified_InfDimSys_With_Disturbances} is UGAS and thus 
\eqref{InfiniteDim} is WURS.
%
%
%
%
\end{proof}

%
%

%
%
%
%

\begin{Lemm}
\label{lem:WURS_implies_ISS_LF}
If \eqref{InfiniteDim} is WURS and Assumption~\ref{Assumption1}
is satisfied then there exists a locally Lipschitz continuous ISS Lyapunov
function for \eqref{InfiniteDim}. 
\end{Lemm}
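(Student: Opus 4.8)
The plan is to obtain the Lyapunov function from the closed-loop converse theorem and then show that the \emph{same} function works in implication form for the open-loop system \eqref{InfiniteDim}. First, since \eqref{InfiniteDim} is WURS, Definition~\ref{def:WURS} furnishes a locally Lipschitz feedback $\varphi:X\to\R_+$ and $\psi\in\Kinf$ with $\varphi(x)\ge\psi(\|x\|_X)$ such that the closed-loop system \eqref{eq:Modified_InfDimSys_With_Disturbances} is UGAS. Assumption~\ref{Assumption1} and Proposition~\ref{prop:Flow_of_g} then guarantee that the flow $\phi_{\varphi}$ of \eqref{eq:Modified_InfDimSys_With_Disturbances} is Lipschitz continuous on compact intervals, so Theorem~\ref{LipschitzConverseLyapunovTheorem-1} applies and yields a locally Lipschitz $V:X\to\R_+$ together with $\psi_1,\psi_2,\alpha\in\Kinf$ with $\psi_1(\|x\|_X)\le V(x)\le\psi_2(\|x\|_X)$ and $\dot V_d(x)\le-\alpha(\|x\|_X)$ for all $x\in X$, $d\in\Dc$. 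I would then claim that this (already coercive) $V$ is an ISS Lyapunov function for \eqref{InfiniteDim} with gain $\chi:=\psi^{-1}$, so that only the dissipation implication \eqref{DissipationIneq} remains to be checked.

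To verify the implication, fix $x\neq0$ and $u\in\Uc$ with $\|x\|_X\ge\chi(\|u(0)\|_U)$; then $\varphi(x)\ge\psi(\|x\|_X)\ge\|u(0)\|_U>0$, so $d_0:=u(0)/\varphi(x)$ lies in $D$ and satisfies $u(0)=d_0\varphi(x)$ (the degenerate case $x=0$, which forces $u(0)=0$, is treated separately). Taking the constant disturbance $d\equiv d_0\in\Dc$, the whole matter reduces to proving that the open- and closed-loop Dini derivatives at $x$ coincide, $\dot V_u(x)=\dot V_d(x)$, since the right-hand side is $\le-\alpha(\|x\|_X)$ by construction.

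The heart of the argument is a first-order comparison of the two mild trajectories. Subtracting the representations \eqref{InfiniteDim_Integral_Form} for $\phi(t,x,u)$ and $\phi_{\varphi}(t,x,d)$, the common term $T_t x$ cancels and
\[
\phi(t,x,u)-\phi_{\varphi}(t,x,d)=\int_0^t T_{t-s}\big[f(\phi(s,x,u),u(s))-f(\phi_{\varphi}(s,x,d),d_0\varphi(\phi_{\varphi}(s,x,d)))\big]\,ds.
\]
Splitting the bracket and invoking the two Lipschitz estimates of Assumption~\ref{Assumption1} bounds the integrand by $L_f^1\,\|\phi(s,x,u)-\phi_{\varphi}(s,x,d)\|_X+L_f^2\,\eps(s)$, where $\eps(s):=\|u(s)-u(0)\|_U+|\varphi(x)-\varphi(\phi_{\varphi}(s,x,d))|\to0$ as $s\to+0$ by right-continuity of $u$ and continuity of $\varphi$ and of $\phi_{\varphi}$ in time. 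Since $\|T_{t-s}\|$ is bounded for small $t$ and $\int_0^t\eps(s)\,ds=o(t)$, Gronwall's inequality yields $\|\phi(t,x,u)-\phi_{\varphi}(t,x,d)\|_X=o(t)$. Local Lipschitz continuity of $V$ on a fixed ball containing both trajectories for small $t$ then gives $|V(\phi(t,x,u))-V(\phi_{\varphi}(t,x,d))|=o(t)$, so
\[
\dot V_u(x)=\mathop{\overline{\lim}}\limits_{t\to+0}\tfrac1t\big(V(\phi_{\varphi}(t,x,d))-V(x)\big)=\dot V_d(x)\le-\alpha(\|x\|_X),
\]
which is exactly \eqref{DissipationIneq}.

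I expect this Dini-derivative comparison to be the main obstacle. In the infinite-dimensional mild setting there is no classical velocity to equate at $t=0$, so matching the two dynamics must be done directly from the integral equation; the Lipschitz continuity of $f$ in \emph{both} arguments (Assumption~\ref{Assumption1}) and the boundedness of the semigroup on compact intervals are precisely what upgrade the naive $O(t)$ bound on the trajectory difference to the $o(t)$ bound needed for the Dini derivatives to agree. The delicate point is that the input mismatch $\eps(s)$ must vanish as $s\to+0$, which is why one matches $u$ by the constant disturbance $d_0$ at the initial instant and uses the regularity of $\varphi$ and of the closed-loop flow.
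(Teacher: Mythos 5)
Your proposal is correct, and it follows the paper's overall skeleton: WURS together with Assumption~\ref{Assumption1} gives, via Proposition~\ref{prop:Flow_of_g} and Theorem~\ref{LipschitzConverseLyapunovTheorem-1}, a locally Lipschitz coercive Lyapunov function $V$ for the closed-loop system \eqref{eq:Modified_InfDimSys_With_Disturbances}, and the remaining work is to transfer the closed-loop decay estimate $\dot V_d(x)\le-\alpha(\|x\|_X)$ to the open-loop implication \eqref{DissipationIneq} with gain $\chi=\psi^{-1}$. Where you genuinely differ is in how this transfer is carried out. The paper asserts that the closed-loop estimate for all $d\in\Dc$ is \emph{equivalent} to $\dot V_u(x)\le-\alpha(V(x))$ for all $u\in\Uc$ with $\|u\|_{\Uc}\le\varphi(x)$, and then restricts to $\|u\|_{\Uc}\le\psi(\|x\|_X)$; the identification of open-loop inputs with closed-loop disturbances behind this equivalence is left implicit. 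You instead fix the constant disturbance $d\equiv u(0)/\varphi(x)$ and prove, by subtracting the mild-solution representations and applying Gronwall, that the open- and closed-loop trajectories differ by $o(t)$, so the two Dini derivatives at $x$ coincide. This buys two things: it supplies precisely the estimate that makes the paper's terse "equivalence" rigorous (you match the inputs only at $t=0^+$ and absorb the mismatch $\|u(s)-u(0)\|_U+|\varphi(x)-\varphi(\phi_{\varphi}(s,x,d))|$ into the $o(t)$ term), and it yields the implication directly in the form required by Definition~\ref{def:ISS_LF}, whose premise involves only $\|u(0)\|_U$ rather than $\|u\|_{\Uc}$ --- a discrepancy that the paper's formulation bridges only through an implicit causality/truncation argument. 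The price is reliance on right-continuity of $u$ at $0$, but that convention for piecewise continuous inputs is already implicit in the paper: Lemma~\ref{AuxiliaryEquality} needs it as well.

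Two small repairs. First, the chain ``$\varphi(x)\ge\psi(\|x\|_X)\ge\|u(0)\|_U>0$'' is wrong when $u(0)=0$; what you need (and have, since $x\neq0$) is $\varphi(x)\ge\psi(\|x\|_X)>0$, after which $d_0:=u(0)/\varphi(x)\in D$ regardless of whether $u(0)$ vanishes. Second, you defer the case $x=0$ but never treat it: there the premise of \eqref{DissipationIneq} forces $u(0)=0$, and comparing $\phi(t,0,u)$ with the closed-loop trajectory $\phi_{\varphi}(t,0,0)$ --- which is identically zero, since UGAS gives $\|\phi_{\varphi}(t,0,d)\|_X\le\beta(0,t)=0$ --- via the same Gronwall estimate yields $\|\phi(t,0,u)\|_X=o(t)$, hence $\dot V_u(0)\le 0=-\alpha(0)$, as required.
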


\begin{proof}
Let \eqref{InfiniteDim} be WURS, which means that 
\eqref{eq:Modified_InfDimSys_With_Disturbances} is UGAS over $\Dc$ for
suitable $\varphi, \psi$ chosen in accordance with Definition~\ref{def:WURS}.
Proposition~\ref{prop:Flow_of_g} and Theorem~\ref{LipschitzConverseLyapunovTheorem-1} imply that
there exists a locally Lipschitz continuous
 Lyapunov function $V:X\to\R_+$, satisfying \eqref{LyapFunk_1Eig_UGAS}
for certain $\psi_1,\psi_2 \in \Kinf$ and whose Lie derivative along the solutions of \eqref{eq:Modified_InfDimSys_With_Disturbances} for all $x \in X$ and for all $d \in \Dc$ satisfies the estimate
\begin{eqnarray}
\dot{V}_d(x)\leq -\alpha(V(x)).
\label{eq:WURS_LF_Ineq}
\end{eqnarray}
This is equivalent to the fact that 
\begin{eqnarray}
\dot{V}_u(x)\leq -\alpha(V(x)).
\label{eq:WURS_LF_Ineq_2}
\end{eqnarray}
holds for all $x \in X$ and all $u \in \Uc$ satisfying $\|u\|_{\Uc} \leq \varphi(x)$.
This automatically implies that \eqref{eq:WURS_LF_Ineq_2} holds
for all $x \in X$ and all $u \in \Uc$ with $\|u\|_{\Uc} \leq \psi(\|x\|_X)$.

In other words, $V$ is an ISS Lyapunov function for \eqref{InfiniteDim} in an implication form with Lyapunov gain $\chi:=\psi^{-1}$.
\end{proof}

We conclude our investigation with the following characterization of ISS property:
\begin{Satz}
\label{ISS_Converse_Lyapunov_Theorem}
Let  Assumption~\ref{Assumption1} be fulfilled. Then the following statements are equivalent:
\begin{enumerate}
    \item \eqref{InfiniteDim} is ISS.
    \item \eqref{InfiniteDim} is WURS.
    \item There exists a  coercive ISS Lyapunov function for
          \eqref{InfiniteDim} which is locally Lipschitz continuous.
\end{enumerate}
\end{Satz}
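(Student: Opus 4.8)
The plan is to prove the three-way equivalence as a single closed cycle of implications $(1)\Rightarrow(2)\Rightarrow(3)\Rightarrow(1)$, because each arrow has already been prepared by a result stated above. The substantive work — the construction of the stabilizing feedback, the regularity of the closed-loop flow, and the conversion of a UGAS Lyapunov function into an ISS Lyapunov function — is carried out in the auxiliary lemmas, so the theorem itself is essentially the step that assembles them. I would open the proof by noting that it suffices to verify the cycle, then dispatch the three arrows in turn.

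For $(1)\Rightarrow(2)$ I would invoke Lemma~\ref{lem:ISS_implies_WURS} directly: ISS already forces the existence of a locally Lipschitz feedback $\varphi$, bounded below by a $\Kinf$ function, rendering the closed-loop system \eqref{eq:Modified_InfDimSys_With_Disturbances} UGAS over $\Dc$, which is exactly WURS. For $(2)\Rightarrow(3)$ I would apply Lemma~\ref{lem:WURS_implies_ISS_LF}; this is the single place where Assumption~\ref{Assumption1} is consumed, since bi-Lipschitz continuity of $f$ is needed (through Lemma~\ref{lem:Regularity_of_g} and Proposition~\ref{prop:Flow_of_g}) to ensure that the closed-loop flow is Lipschitz on compact intervals. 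That regularity lets Theorem~\ref{LipschitzConverseLyapunovTheorem-1} produce a locally Lipschitz coercive Lyapunov function for the disturbed system, which Lemma~\ref{lem:WURS_implies_ISS_LF} then reinterprets as a coercive, locally Lipschitz ISS Lyapunov function for \eqref{InfiniteDim} with gain $\chi=\psi^{-1}$. Finally $(3)\Rightarrow(1)$ is precisely the direct Lyapunov theorem, Proposition~\ref{Direct_LT_ISS_maxType}.

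Because every arrow is a quoted result, I do not expect any new estimates; the only real care — and thus the main, if modest, obstacle — is hypothesis bookkeeping around the cycle. Specifically, I would check that forward completeness, which is built into the definition of ISS, is correctly carried into the WURS statement, and that Assumption~\ref{Assumption1} is genuinely available at the one step $(2)\Rightarrow(3)$ that requires it. I would also confirm that the feedback $\varphi$ produced in Lemma~\ref{lem:ISS_implies_WURS} meets the local Lipschitz and lower-bound requirements of Definition~\ref{def:WURS}, so that Lemma~\ref{lem:WURS_implies_ISS_LF} applies without modification and the loop closes cleanly.
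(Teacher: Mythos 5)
Your proposal is correct and matches the paper's own proof, which likewise assembles the cycle $(1)\Rightarrow(2)\Rightarrow(3)\Rightarrow(1)$ by citing Lemma~\ref{lem:ISS_implies_WURS}, Lemma~\ref{lem:WURS_implies_ISS_LF}, and Proposition~\ref{Direct_LT_ISS_maxType}. Your additional bookkeeping (where Assumption~\ref{Assumption1} is consumed, and that the feedback $\varphi$ satisfies Definition~\ref{def:WURS}) is consistent with how those lemmas are proved in the paper.
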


\begin{proof}
The claim follows from Proposition~\ref{Direct_LT_ISS_maxType} and Lemmas~\ref{lem:ISS_implies_WURS} and \ref{lem:WURS_implies_ISS_LF}. 
\end{proof}

Theorem~\ref{ISS_Converse_Lyapunov_Theorem} shows that ISS is equivalent to the existence of a Lipschitz continuous \textit{coercive ISS Lyapunov function}. 
At the same time, the question whether the existence of a \textit{non-coercive ISS Lyapunov function} is sufficient for ISS of \eqref{InfiniteDim} remains open. This question is essentially infinite-dimensional, since in the ODE case non-coercive Lyapunov functions are automatically coercive, at least locally. In contrast to ODEs,
for linear infinite-dimensional systems, non-coercive ISS Lyapunov functions naturally arise when one constructs Lyapunov functions by solving Lyapunov operator equation, see \cite[Theorem 5.1.3 ]{CuZ95}. Hence it is of great interest to study criteria of ISS in terms of non-coercive ISS Lyapunov functions. In the next section, we show some preliminary results in this direction. An extensive treatment of this topic for nonlinear systems without inputs has been performed in \cite{MiW16c}.

\section{Linear systems}
\label{sec:LinSys}

In this section, we derive a converse Lyapunov theorem for linear systems
with a
bounded input operator $B$ of the form
\begin{equation}
\label{LinSys}
\dot{x}=Ax+Bu.
\end{equation}
The assumptions on $A$ are as before.
We start with a definition.
\begin{Def}
System \eqref{InfiniteDim} is {\it globally asymptotically
stable at zero uniformly with respect to the state} (0-UGAS), if there
exists a $ \beta \in \KL$, such that
\begin{equation}
\label{UniStabAbschaetzung}
x \in X,\ t \geq 0 \quad \Rightarrow \quad \left\| \phi(t,x,0) \right\|_{X} \leq  \beta(\left\| x \right\|_{X},t) .
\end{equation}
\end{Def}

Now we proceed with a technical lemma; its proof is straightforward and is omitted.
\begin{Lemm}
\label{AuxiliaryEquality}
Let $B \in L(U,X)$ and let $T$ be a $C_0$-semigroup. Then for any $u\in
{\cal U}$ it holds that
\begin{eqnarray}
\lim\limits_{h \rightarrow +0} \frac{1}{h} \int_0^h{T_{h-s} B u(s)ds} = B u(0).
\label{eq:Convolution_Bu0}
\end{eqnarray}
\end{Lemm}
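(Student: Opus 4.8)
The plan is to show that the difference between the average and $Bu(0)$ vanishes by splitting it into a piece controlling the variation of the input near $0$ and a piece controlling the strong continuity of the semigroup. First I would use $\frac{1}{h}\int_0^h Bu(0)\,ds = Bu(0)$ to write
\[
\frac{1}{h}\int_0^h T_{h-s}Bu(s)\,ds - Bu(0) = I_1(h) + I_2(h),
\]
where
\[
I_1(h) := \frac{1}{h}\int_0^h T_{h-s}B\big(u(s)-u(0)\big)\,ds,
\]
\[
I_2(h) := \frac{1}{h}\int_0^h \big(T_{h-s}-\Id\big)Bu(0)\,ds,
\]
and $\Id$ denotes the identity on $X$. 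Then I would estimate each summand separately as $h \to +0$.

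For $I_1(h)$ I would invoke the standard growth bound $\|T_t\| \leq M e^{\omega t}$ valid on any compact time interval for a $C_0$-semigroup, so that $\|T_{h-s}\| \leq M e^{|\omega| h}$ for $s \in [0,h]$; together with boundedness of $B$ this yields
\[
\|I_1(h)\|_X \leq M e^{|\omega| h}\,\|B\|\,\sup_{s\in[0,h]}\|u(s)-u(0)\|_U.
\]
Since $u \in \Uc$ is piecewise continuous it is right-continuous at $0$, so $\sup_{s\in[0,h]}\|u(s)-u(0)\|_U \to 0$ as $h \to +0$ while the prefactor stays bounded; hence $I_1(h) \to 0$.

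For $I_2(h)$ I would substitute $\tau = h-s$ to rewrite it as $\frac{1}{h}\int_0^h (T_\tau - \Id)Bu(0)\,d\tau$. By strong continuity of the $C_0$-semigroup, the map $\tau \mapsto (T_\tau - \Id)Bu(0)$ is continuous on $[0,h]$ and vanishes at $\tau = 0$; therefore its average over $[0,h]$ converges to its value at $0$, namely $0$, as $h \to +0$. Combining the two estimates gives $\lim_{h\to+0}\big(\frac1h\int_0^h T_{h-s}Bu(s)\,ds - Bu(0)\big)=0$, which is the claim.

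The main obstacle, though a mild one, is the infinite-dimensional setting, which forbids appealing to compactness or to norm convergence $T_\tau \to \Id$ of the operators; the argument instead relies only on the uniform bound $\|T_t\| \le M e^{\omega t}$ on compact intervals and on strong continuity $T_\tau x \to x$, both intrinsic to $C_0$-semigroups. One should also be careful to use the right-continuity of the \emph{piecewise continuous} input at $0$ rather than mere measurability, which is exactly what makes $\sup_{s\in[0,h]}\|u(s)-u(0)\|_U \to 0$ legitimate.
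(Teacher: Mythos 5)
The paper offers no proof to compare against: it states that the proof of this lemma ``is straightforward and is omitted.'' Your argument is correct and is precisely the standard one that the authors had in mind --- the decomposition into the input-variation term $I_1(h)$ and the strong-continuity term $I_2(h)$, the uniform bound on $\|T_t\|$ over a compact interval (which holds for any $C_0$-semigroup), and the averaging of the continuous function $\tau \mapsto (T_\tau - \Id)Bu(0)$ are all valid steps in the Banach-space setting, with no hidden use of compactness or norm continuity of the semigroup. The one point worth making explicit is that the conclusion with the value $u(0)$ (rather than the right limit $u(0^+)$) holds only if $u$ is right-continuous at $0$; this assumption is already implicit in the statement of the lemma itself, so your reading of ``piecewise continuous'' as right-continuous at $0$ is exactly the convention under which the paper's claim is true.
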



The main technical result of this section is as follows:
\begin{proposition}
\label{ConverseLyapunovTheorem_LinearSystems}
%
If \eqref{LinSys} is 0-UGAS, then $V:X \to \R_+$, defined as
\begin{eqnarray}
V(x)=\int_0^{\infty} \|T_t x\|_X^2 dt
\label{eq:LF_LinSys_Banach}
\end{eqnarray}
is a non-coercive ISS Lyapunov function for \eqref{LinSys} which is
locally Lipschitz continuous.
 Moreover, $\forall x \in X$, $\forall u \in {\cal U}$ and $\forall \eps>0$ it holds that 
\begin{equation}
\label{Final_Lyapunov_Inequality}
\dot{V}_u(x) \leq -\|x\|_X^2 + \frac{\eps M^2}{2\lambda} \|x\|_X^2 + \frac{M^2}{2\lambda \eps}  \|B\|^2 \|u(0)\|_U^2,
\end{equation}
where $M, \lambda>0$ are so that 
\begin{equation}
\|T_t \| \leq M e^{-\lambda t}.
\label{eq:SemigroupEstimate}
\end{equation}
\end{proposition}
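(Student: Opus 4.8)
The plan is to verify the three claims in turn — the non-coercive sandwich bound, local Lipschitz continuity, and the dissipation estimate \eqref{Final_Lyapunov_Inequality} — and then to read off the ISS Lyapunov implication \eqref{DissipationIneq} from \eqref{Final_Lyapunov_Inequality}. First I would record that for a linear system 0-UGAS forces uniform exponential stability of the semigroup: by linearity $\|T_t\|\leq\beta(1,t)$, and since $\beta(1,\cdot)\in\LL$ there is a $t_0$ with $\|T_{t_0}\|<1$, from which the standard $C_0$-semigroup argument yields constants $M,\lambda>0$ satisfying \eqref{eq:SemigroupEstimate}. With \eqref{eq:SemigroupEstimate} in hand, $\|T_tx\|_X^2\leq M^2e^{-2\lambda t}\|x\|_X^2$ shows the integral \eqref{eq:LF_LinSys_Banach} converges and $V(x)\leq\frac{M^2}{2\lambda}\|x\|_X^2$, so we may take $\psi_2(r)=\frac{M^2}{2\lambda}r^2\in\Kinf$; positivity $V(x)>0$ for $x\neq0$ follows from continuity of $t\mapsto\|T_tx\|_X$ near $t=0$, where it equals $\|x\|_X>0$. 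This establishes \eqref{LyapFunk_1Eig_noncoercive}. Local Lipschitz continuity I would obtain from $\big|\,\|T_tx\|_X^2-\|T_ty\|_X^2\,\big|\leq\|T_t(x-y)\|_X(\|T_tx\|_X+\|T_ty\|_X)$ combined with \eqref{eq:SemigroupEstimate}, which gives $|V(x)-V(y)|\leq\frac{M^2}{2\lambda}(\|x\|_X+\|y\|_X)\|x-y\|_X$ and hence Lipschitz constant $\frac{M^2C}{\lambda}$ on $\overline{B}_C$.

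For the dissipation estimate I would compute the Dini derivative \eqref{UGAS_wrt_D_LyapAbleitung} directly along the mild solution $\phi(h,x,u)=T_hx+z_h$ with $z_h:=\int_0^hT_{h-s}Bu(s)\,ds$. The free part is exact: by the semigroup shift, $V(T_hx)=\int_h^\infty\|T_\tau x\|_X^2\,d\tau=V(x)-\int_0^h\|T_\tau x\|_X^2\,d\tau$, contributing $-\|x\|_X^2$ in the limit. For the full increment I would use the triangle inequality and $\|a+b\|^2\leq(\|a\|+\|b\|)^2$ — the device that replaces the inner-product expansion unavailable in a general Banach space — and integrate termwise using \eqref{eq:SemigroupEstimate} to arrive at
\begin{equation*}
V(\phi(h,x,u))-V(x)\leq-\int_0^h\|T_\tau x\|_X^2\,d\tau+\frac{M^2}{\lambda}\|x\|_X\|z_h\|_X+\frac{M^2}{2\lambda}\|z_h\|_X^2.
\end{equation*}
Dividing by $h$ and letting $h\to+0$, Lemma~\ref{AuxiliaryEquality} (equation \eqref{eq:Convolution_Bu0}) gives $\|z_h\|_X/h\to\|Bu(0)\|_X$, so the quadratic term $\|z_h\|_X^2/h\to0$ while the cross term tends to $\frac{M^2}{\lambda}\|x\|_X\|Bu(0)\|_X$, yielding $\dot V_u(x)\leq-\|x\|_X^2+\frac{M^2}{\lambda}\|x\|_X\|Bu(0)\|_X$.

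Finally, bounding $\|Bu(0)\|_X\leq\|B\|\,\|u(0)\|_U$ and applying Young's inequality $ab\leq\frac{\eps}{2}a^2+\frac{1}{2\eps}b^2$ to the cross term produces exactly \eqref{Final_Lyapunov_Inequality}, valid for every $\eps>0$. To extract the implication \eqref{DissipationIneq}, I would fix $\eps$ so that $\frac{\eps M^2}{2\lambda}\leq\frac12$, absorb that term into $-\|x\|_X^2$, and then for $\|x\|_X\geq\chi(\|u(0)\|_U)$ with $\chi$ linear (proportional to $\frac{M^2\|B\|}{\lambda}$) dominate the input term by $\frac14\|x\|_X^2$, leaving $\dot V_u(x)\leq-\frac14\|x\|_X^2=:-\alpha(\|x\|_X)$ with $\alpha\in\PD$. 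The main obstacle is the Dini-derivative step: one must justify passing the $\limsup$ through the integral and control the input contribution purely by norm inequalities, relying on Lemma~\ref{AuxiliaryEquality} to identify the first-order term $Bu(0)$ and on the quadratic smallness of $z_h$ to discard the second-order contribution.
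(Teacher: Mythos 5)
Your proof is correct, and its overall skeleton coincides with the paper's: 0-UGAS forces exponential stability of the semigroup, the bound \eqref{eq:SemigroupEstimate} gives the upper estimate and positivity of $V$ (your positivity argument is the contrapositive of the paper's ``$V(x)=0\Rightarrow x=0$'' argument), the Dini derivative is computed from the mild-solution formula via $\|a+b\|_X^2\leq(\|a\|_X+\|b\|_X)^2$, Lemma~\ref{AuxiliaryEquality} identifies the first-order term $Bu(0)$, Young's inequality yields \eqref{Final_Lyapunov_Inequality}, and your Lipschitz estimate is the paper's computation verbatim, with the same constant $M^2C/\lambda$ on $\overline{B}_C$. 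The genuine difference lies in how the limit $h\to+0$ is justified for the input-dependent terms. The paper keeps the cross term as an integral over $t$, exhibits an integrable dominating function of the form $2M^3\|x\|_X\|B\|\,\|u\|_{\mathcal U}e^{-\lambda t}$, and invokes the dominated convergence theorem to pass the limit inside, applying Young's inequality pointwise in $t$ afterwards. You instead integrate out $t$ \emph{first}, using \eqref{eq:SemigroupEstimate} to bound the cross and quadratic contributions by $\frac{M^2}{\lambda}\|x\|_X\|z_h\|_X$ and $\frac{M^2}{2\lambda}\|z_h\|_X^2$; the limit then reduces to the scalar facts $\|z_h\|_X/h\to\|Bu(0)\|_X$ (again by Lemma~\ref{AuxiliaryEquality}) and $\|z_h\|_X^2/h\to 0$, so no interchange of limit and integral is ever needed. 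Your route is more elementary in this one step, eliminating the measure-theoretic tool at the cost of committing early to the crude exponential bound inside the $t$-integral; the paper's route retains the sharper intermediate identity $I_2=\int_0^\infty 2\|T_tx\|_X\|T_tBu(0)\|_X\,dt$, which could in principle yield finer estimates, but since both arguments then apply Young's inequality they arrive at the identical inequality \eqref{Final_Lyapunov_Inequality}, and your passage to the implication form with a linear gain matches the paper's.
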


\begin{proof}
Let \eqref{LinSys} be 0-UGAS and pick $u\equiv 0$. 
Then \eqref{UniStabAbschaetzung} implies $\|T_t x\|_X \leq \beta(1,t)$ for
all $t \geq 0$ and for all $x$ with  $\|x\|_X=1$. Since $\beta \in \KL$,
there exists a $t^*$ such that  $\|T_{t^*}x\|_X < 1$ for all $x$,
$\|x\|_X=1$. Thus, $\|T_{t^*}\|<1$ and consequently $T$ is an exponentially
stable semigroup \cite[Theorem 2.1.6]{CuZ95}, i.e. there exist
$M,\lambda>0$ such that \eqref{eq:SemigroupEstimate} holds.

Consider $V:X \to \R_+$ as defined in \eqref{eq:LF_LinSys_Banach}.
We have
\begin{eqnarray}
V(x) \leq \int_0^{\infty}\hspace{-1.5mm} \|T_t \|^2\|x\|_X^2 dt \leq \frac{M^2}{2\lambda} \|x\|_X^2.
\label{eq:LF_Upper_estimate}
\end{eqnarray}

Let $V(x)=0$. Then $\|T_t x\|_X \equiv 0$ a.e. on $[0,\infty)$. Strong continuity of $T$ implies that $x = 0$, and thus 
\eqref{LyapFunk_1Eig_noncoercive} holds.

Next we estimate the Dini derivative of $V$: 
\begin{align*}
\dot{V}_u(x)& = \mathop{\overline{\lim}} \limits_{h \rightarrow +0} {\frac{1}{h}(V(\phi(h,x,u))-V(x)) } \\
    = & \mathop{\overline{\lim}} \limits_{h \rightarrow +0} {\frac{1}{h}\Big(\int_0^{\infty} \|T_t \phi(h,x,u)\|_X^2 dt - \int_0^{\infty} \|T_t x\|_X^2 dt \Big) } \\
    = & \mathop{\overline{\lim}} \limits_{h \rightarrow +0} \frac{1}{h}\Big(\int_0^{\infty} \Big\|T_t \Big(T_hx + \int_0^h{T_{h-s} B u(s)ds}\Big) \Big\|_X^2 dt \\
    & \quad -\int_0^{\infty} \|T_t x\|_X^2 dt \Big)  \\
    = & \mathop{\overline{\lim}} \limits_{h \rightarrow +0} \frac{1}{h}\Big(\int_0^{\infty} \Big\|T_{t+h} x + T_t \int_0^h{T_{h-s} B u(s)ds}\Big\|_X^2 dt \\
		    & \quad - \int_0^{\infty} \|T_t x\|_X^2 dt \Big)  
\end{align*}
\begin{align*}		
   \leq & \mathop{\overline{\lim}} \limits_{h \rightarrow +0} \frac{1}{h}\Big(\int_0^{\infty} \Big(\Big\|T_{t+h} x\Big\|_X \\
    &\quad + \Big\|T_t \int_0^h\hspace{-1.5mm}{T_{h-s} B u(s)ds}\Big\|_X \Big)^2 dt - \int_0^{\infty} \hspace{-1.5mm}\|T_t x\|_X^2 dt \Big)  \\
   = & I_1 + I_2,
\end{align*}
where
\begin{eqnarray*}
I_1:= \mathop{\overline{\lim}} \limits_{h \rightarrow +0} \frac{1}{h}\Big(\int_0^{\infty} \|T_{t+h} x\|_X^2 dt - \int_0^{\infty} \|T_t x\|_X^2 dt \Big)
\end{eqnarray*}
and
\begin{align*}
I_2  := &\mathop{\overline{\lim}} \limits_{h \rightarrow +0} \frac{1}{h}  \int_0^{\infty} \Big( 2\Big\|T_{t+h} x\Big\|_X \Big\|T_t \int_0^h\hspace{-1.5mm}{T_{h-s} B u(s)ds}\Big\|_X \\
 &\quad\quad\quad\quad + \Big\|T_t \int_0^h{T_{h-s} B u(s)ds}\Big\|_X^2 \Big) dt.
\end{align*}
Let us compute $I_1$:
\begin{eqnarray*}
I_1 &=& \mathop{\overline{\lim}} \limits_{h \rightarrow +0} \frac{1}{h}\Big(   \int_h^{\infty} \|T_t x\|_X^2 dt - \int_0^{\infty} \|T_t x\|_X^2 dt \Big) \\
    &=& \mathop{\overline{\lim}} \limits_{h \rightarrow +0} - \frac{1}{h}   \int_0^h \|T_t x\|_X^2 dt \\
    &=& -\|x\|_X^2.
\end{eqnarray*}
Now we proceed with $I_2$:
\begin{align*}
I_2 = & \mathop{\overline{\lim}} \limits_{h \rightarrow +0}  \int_0^{\infty}\hspace{-1.5mm} 2\Big\|T_{t+h} x\Big\|_X \Big\|T_t \frac{1}{h} \int_0^h\hspace{-1.5mm}{T_{h-s} B u(s)ds}\Big\|_X dt \\
      & \qquad\qquad+ \mathop{\overline{\lim}} \limits_{h \rightarrow +0} \int_0^{\infty}\hspace{-1mm} \frac{1}{h} \Big\|T_t \int_0^h{T_{h-s} B u(s)ds}\Big\|_X^2 dt.
\end{align*}
The limit of the second term equals zero since
\begin{multline*}
\mathop{\overline{\lim}} \limits_{h \rightarrow +0} \int_0^{\infty} \frac{1}{h} \Big\|T_t   \int_0^h{T_{h-s} B u(s)ds}\Big\|_X^2 dt \\
\leq \mathop{\overline{\lim}} \limits_{h \rightarrow +0} \int_0^{\infty} \frac{1}{h} M^4 e^{-2\lambda t}\|B\| \|u\|_{\cal U}
h^2 dt 
= \ 0.
\end{multline*}
To compute the limit of the first term, note that 
\begin{align*}
2\Big\|T_{t+h} x\Big\|_X \Big\|T_t \frac{1}{h} & \int_0^h{T_{h-s} B u(s)ds}\Big\|_X \\
\leq&
2 M \|x\|_X \|T_t \| M  \|B\| \sup_{r \in [0,h]}\|u(r)\|_{U} \\
\leq&
2 M^3 \|x\|_X  \|B\| \|u\|_{\cal U} e^{-\lambda t}.
\end{align*}
Thus, we can apply the dominated convergence theorem. Together with Lemma~\ref{AuxiliaryEquality} and Young's inequality this leads to
\begin{eqnarray*}
\hspace{-8mm}
I_2 & = & \int_0^{\infty} 2\|T_t x\|_X \|T_t  B u(0)\|_X dt \\
    & \leq & \int_0^{\infty} \eps \|T_t x\|_X^2 + \frac{1}{\eps} \|T_t  B u(0)\|_X^2 dt \\
    & \leq & \int_0^{\infty} \eps \|T_t \|^2 dt \|x\|_X^2 + \frac{1}{\eps} \int_0^{\infty} \|T_t \|^2 \|B u(0)\|_X^2 dt \\
    & \leq &  \frac{\eps M^2}{2\lambda} \|x\|_X^2 + \frac{M^2}{2\lambda \eps}  \|B\|^2 \|u(0)\|_U^2,
\end{eqnarray*}
for any $\eps >0$. 

Overall, we obtain that  $\forall x \in X$, $\forall u \in {\cal U}$ and for all $\eps>0$ the inequality \eqref{Final_Lyapunov_Inequality} holds. Considering $\eps < \frac{2\lambda}{M^2}$ this shows that $V$ is a non-coercive ISS Lyapunov function (in dissipative form) for \eqref{LinSys}.
It can be brought into implication form (as in \eqref{DissipationIneq}) by choosing the Lyapunov gain $\chi(s):=Rs$ for all $s \in \R_+$ and for $R$ large enough.

It remains to show the local Lipschitz continuity of $V$.
Pick arbitrary $r>0$ and any $x,y \in \overline{B}_r$. It holds that
\begin{align*}
|V&(x) - V(y)| = \Bigg| \int_0^{+\infty} \|T_t x\|^2_X - \|T_t y\|^2_X dt \Bigg| \\
&\leq   \int_0^{+\infty} \Big|\|T_t x\|_X - \|T_t y\|_X\Big| \big(\|T_t x\|_X + \|T_t y\|_X\big) dt \\
&\leq   \int_0^{+\infty} \|T_t x - T_t y\|_X \big(\|T_t x\|_X + \|T_t y\|_X\big) dt \\
&\leq   \int_0^{+\infty} Me^{-\lambda t} \|x - y\|_X Me^{-\lambda t} (\|x\|_X + \|y\|_X) dt \\
&\leq   \frac{M^2 r}{\lambda} \|x-y\|_X,
\end{align*}
which shows the Lipschitz continuity of $V$.
\end{proof}

\begin{remark}
The ISS Lyapunov function
$V$ defined in \eqref{eq:LF_LinSys_Banach} is not coercive in general. 
Noncoercivity of $V$ defined by \eqref{eq:LF_LinSys_Banach} implies that the system
\[\dot{x}=Ax, \quad y=x\]
is not exactly observable on $[0,+\infty)$ (even though we can measure the full state!), see \cite[Corollary 4.1.14]{CuZ95}.
The reason for this is that for any given exponential decay rate there are states that decay faster than this given rate, and thus we lose a part of the information about the state "infinitely fast".
\end{remark}

\begin{remark}
\label{rem:Non-coercive_ISS_LFs} 
Note that according to \cite[Section III.B]{MiW17b}, the existence of a non-coercive Lyapunov function satisfying 
\eqref{Final_Lyapunov_Inequality} ensures ISS of
\eqref{LinSys}.
\end{remark}

Below we provide another construction of ISS Lyapunov functions for the system \eqref{LinSys} with bounded input operators.
It is based on a standard construction in the analysis of $C_0$-semigroups, see e.g. \cite[Eq. (5.14)]{Paz83}.

For exponentially stable $C_0$-semigroup $T$ there exist $M,\lambda>0$ such that the estimate
\eqref{eq:SemigroupEstimate} holds. Choose $\gamma>0$ such that $\gamma -
\lambda < 0$. Then
\begin{equation}
    \label{eq:eqnorm}
    V^\gamma (x) := \max_{s\geq 0} \| e^{\gamma s} T_s x\|_X
\end{equation}
defines an equivalent norm on $X$, for which we have
\begin{multline}
    \label{eq:eqnorm2}
    V^\gamma(T_t x) = \max_{s\geq 0} \| e^{\gamma s} T_s T_t  x\|_X \\ =
    e^{-\gamma t} \max_{s\geq 0} \| e^{\gamma (s+t)} T_{s+t} x\|_X \leq
    e^{-\gamma t} V^\gamma(x) \,.
\end{multline}
Based on this inequality we obtain the following statement for ISS
Lyapunov functions.

\begin{proposition}
    \label{p:Lyapunovfunction2}
    Let \eqref{LinSys} be 0-UGAS. Let $M,\lambda>0$ be such that
\eqref{eq:SemigroupEstimate} holds and let $0<\gamma < \lambda$.
Then $V^\gamma :X \to \R_+$, defined by \eqref{eq:eqnorm}
is a coercive ISS Lyapunov function for \eqref{LinSys}. In particular, for
any $u\in {\cal U}$, $x\in X$, we
have the dissipation inequality
\begin{equation}
    \label{eq:dissipation2}
    \dot{V}^\gamma_u(x) \leq - \gamma \ V^\gamma(x) + V^\gamma (Bu(0)) \,.
\end{equation}
\end{proposition}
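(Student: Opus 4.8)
The plan is to verify that $V^\gamma$ defined by \eqref{eq:eqnorm} is a coercive ISS Lyapunov function by establishing the two-sided coercivity bound \eqref{LyapFunk_1Eig_UGAS} and then deriving the dissipation inequality \eqref{eq:dissipation2}, from which the implication form follows by standard arguments. The coercivity is essentially already contained in the statement that $V^\gamma$ is an equivalent norm: since $\|x\|_X = \|e^{\gamma\cdot 0}T_0 x\|_X \leq \max_{s\geq 0}\|e^{\gamma s}T_s x\|_X = V^\gamma(x)$, the lower bound $\psi_1(r):=r$ works; for the upper bound, using \eqref{eq:SemigroupEstimate} one has $\|e^{\gamma s}T_s x\|_X \leq M e^{(\gamma-\lambda)s}\|x\|_X \leq M\|x\|_X$ since $\gamma-\lambda<0$, so $V^\gamma(x)\leq M\|x\|_X$ and $\psi_2(r):=Mr$ suffices. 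This also confirms the maximum is attained and finite.

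Next I would derive the key contraction estimate. The heart of the matter is inequality \eqref{eq:eqnorm2}, already supplied in the excerpt, which gives $V^\gamma(T_t x)\leq e^{-\gamma t}V^\gamma(x)$. For the full system with input I would write the mild solution $\phi(t,x,u)=T_t x + \int_0^t T_{t-s}Bu(s)\,ds$ and split $V^\gamma(\phi(t,x,u))$ using the triangle inequality for the norm $V^\gamma$, namely $V^\gamma(\phi(t,x,u)) \leq V^\gamma(T_t x) + V^\gamma\!\big(\int_0^t T_{t-s}Bu(s)\,ds\big)$, then apply \eqref{eq:eqnorm2} to the first term. For the Dini derivative I would compute
\begin{equation*}
\dot{V}^\gamma_u(x)=\mathop{\overline{\lim}}_{t\to+0}\frac{1}{t}\big(V^\gamma(\phi(t,x,u))-V^\gamma(x)\big),
\end{equation*}
bounding the first contribution by $\mathop{\overline{\lim}}_{t\to+0}\frac{1}{t}(e^{-\gamma t}-1)V^\gamma(x)=-\gamma V^\gamma(x)$. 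For the input contribution, since $V^\gamma$ is a norm, I would use $\frac{1}{t}V^\gamma\!\big(\int_0^t T_{t-s}Bu(s)\,ds\big)$ and show its limsup is bounded by $V^\gamma(Bu(0))$ using a convolution-limit argument analogous to Lemma~\ref{AuxiliaryEquality}, exploiting continuity of the equivalent norm and the fact that $\frac{1}{t}\int_0^t T_{t-s}Bu(s)\,ds\to Bu(0)$ as $t\to+0$.

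The main obstacle I anticipate is the careful justification of the input term, specifically passing the limsup through the equivalent norm $V^\gamma$ to obtain exactly $V^\gamma(Bu(0))$ rather than a looser bound. Because $V^\gamma$ is defined as a supremum over $s\geq 0$, it is only Lipschitz (with constant $M$ relative to $\|\cdot\|_X$) but not differentiable, so I cannot differentiate inside; instead I would rely on its continuity and sublinearity together with the hard limit $\frac{1}{t}\int_0^t T_{t-s}Bu(s)\,ds\to Bu(0)$ from Lemma~\ref{AuxiliaryEquality}. Once \eqref{eq:dissipation2} is established, I would convert it to the implication form \eqref{DissipationIneq}: using $V^\gamma(Bu(0))\leq M\|B\|\,\|u(0)\|_U$ and choosing the gain $\chi(r):=\frac{2M\|B\|}{\gamma}r$, the condition $\|x\|_X\geq\chi(\|u(0)\|_U)$ combined with $V^\gamma(x)\geq\|x\|_X$ forces $V^\gamma(Bu(0))\leq\frac{\gamma}{2}V^\gamma(x)$, yielding $\dot{V}^\gamma_u(x)\leq-\frac{\gamma}{2}V^\gamma(x)\leq-\frac{\gamma}{2}\|x\|_X=:-\alpha(\|x\|_X)$, which completes the verification that $V^\gamma$ is a coercive ISS Lyapunov function.
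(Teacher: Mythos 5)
Your proposal is correct and follows essentially the same route as the paper's proof: decompose the mild solution, apply the triangle inequality for the norm $V^\gamma$, use the contraction estimate \eqref{eq:eqnorm2} for the semigroup term, and handle the input term via positive homogeneity, continuity of $V^\gamma$, and the convolution limit of Lemma~\ref{AuxiliaryEquality}. You in fact make explicit two points the paper leaves to the reader — the coercivity bounds $\|x\|_X \leq V^\gamma(x) \leq M\|x\|_X$ and the concrete gain $\chi(r)=\frac{2M\|B\|}{\gamma}r$ for the implication form — both of which are correct.
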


\begin{proof}
    In order to obtain the infinitesimal estimate, we compute, using the
    triangle inequality ($V^\gamma$ is a norm), the estimate
    \eqref{eq:eqnorm2}, and Lemma~\ref{AuxiliaryEquality},
    \begin{align*}
    \dot{V}^\gamma_u&(x) = \mathop{\overline{\lim}} \limits_{h \rightarrow +0} {\frac{1}{h}(V^\gamma(\phi(h,x,u))-V^\gamma(x)) } \\
    = & \mathop{\overline{\lim}} \limits_{h \rightarrow +0}
    \frac{1}{h}\Big( V^\gamma \Big(T_h x + \int_0^h{T_{h-s} B u(s)ds}\Big) - V^\gamma(x) \Big)  \\
    \leq & \mathop{\overline{\lim}} \limits_{h \rightarrow +0}
    \frac{1}{h}\Big( V^\gamma \big(T_h x\big) + V^\gamma
    \Big(\int_0^h{T_{h-s} B u(s)ds}\Big) - V^\gamma(x) \Big)\\
    \leq & \mathop{\overline{\lim}} \limits_{h \rightarrow +0}
    \frac{1}{h}\Big( (e^{-\gamma h} -1 ) V^\gamma(x) + V^\gamma
    \Big(\int_0^h{T_{h-s} B u(s)ds}\Big)  \Big)    \\
 \leq &  - \gamma \ V^\gamma(x) + V^\gamma (Bu(0)) \,.
    \end{align*}
This shows $V^\gamma$ is an ISS-Lyapunov function (in the dissipative form) and that
\eqref{eq:dissipation2} holds. Choosing a suitable Lyapunov gain $\chi \in\Kinf$, one can show that \eqref{DissipationIneq} holds and thus $V^{\gamma}$ is an ISS Lyapunov function in implication form. Coercivity is evident by construction.

It remains to show Lipschitz continuity of $V^\gamma$. Pick any $x,y \in X$ and assume that
$V^\gamma(x) > V^\gamma(y)$.
Then
\begin{align*}
V^\gamma(x) - V^\gamma(y)& = \max_{s\geq 0} \| e^{\gamma s} T_s x\|_X - \max_{s\geq 0} \| e^{\gamma s} T_s y\|_X \\
& \leq \max_{s\geq 0} \big(\| e^{\gamma s} T_s x\|_X - \| e^{\gamma s} T_s y\|_X\big) \\        
& \leq \max_{s\geq 0} \big| \| e^{\gamma s} T_s x\|_X - \| e^{\gamma s} T_s y\|_X \big| \\        
& \leq \max_{s\geq 0} \| e^{\gamma s} T_s(x-y)\|_X \\        
& \leq M\|x-y\|_X,        
\end{align*}
which shows that $V^\gamma$ is globally Lipschitz continuous. The case $V^\gamma(y) > V^\gamma(x)$ can be treated analogously.
\end{proof}


Finally, we can state the main result of this section:
\begin{Satz}
Let $B \in L(U,X)$. The following statements are equivalent:
\begin{enumerate}
    \item[(i)] \eqref{LinSys} is ISS.
    \item[(ii)] \eqref{LinSys} is 0-UGAS.
    \item[(iii)] $\{T_t\}_{t\geq 0}$ is an exponentially stable semigroup.
    \item[(iv)] $V$ defined in \eqref{eq:LF_LinSys_Banach} is a (not
          necessarily coercive) locally Lipschitz continuous 
ISS Lyapunov function for \eqref{LinSys}.
    \item[(v)] $V^\gamma$ defined in \eqref{eq:eqnorm} is a coercive globally Lipschitz continuous ISS Lyapunov function for \eqref{LinSys}.
\end{enumerate}
\end{Satz}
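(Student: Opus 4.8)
The plan is to prove the five-fold equivalence by assembling a small web of implications built almost entirely from the results already established in this section; the one genuinely delicate point is the passage from the \emph{non-coercive} object in (iv) back to ISS, which I treat last. First I would dispose of the implications among the ``system-level'' statements (i)--(iii). The implication (i)$\Rightarrow$(ii) is immediate: substituting $u\equiv 0$ into the ISS estimate \eqref{iss_sum} gives $\|\phi(t,x,0)\|_X\leq\beta(\|x\|_X,t)$, which is exactly \eqref{UniStabAbschaetzung}. Conversely (iii)$\Rightarrow$(ii) is just as direct, since \eqref{eq:SemigroupEstimate} yields $\|\phi(t,x,0)\|_X=\|T_tx\|_X\leq Me^{-\lambda t}\|x\|_X$ and $(r,t)\mapsto Me^{-\lambda t}r$ is of class $\KL$. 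The implication (ii)$\Rightarrow$(iii) has in fact already been carried out inside the proof of Proposition~\ref{ConverseLyapunovTheorem_LinearSystems}, where $0$-UGAS is used to produce a time $t^{*}$ with $\|T_{t^{*}}\|<1$ and hence exponential stability. Thus (i), (ii) and (iii) are mutually equivalent.

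Next I would read off the two Lyapunov constructions as consequences of $0$-UGAS. Proposition~\ref{ConverseLyapunovTheorem_LinearSystems} shows that (ii)$\Rightarrow$(iv): under $0$-UGAS the function $V$ of \eqref{eq:LF_LinSys_Banach} is well defined, locally Lipschitz, non-coercive, and an ISS Lyapunov function. Proposition~\ref{p:Lyapunovfunction2} shows that (ii)$\Rightarrow$(v): for any $0<\gamma<\lambda$ the function $V^{\gamma}$ of \eqref{eq:eqnorm} is a coercive, globally Lipschitz ISS Lyapunov function. It then remains to close the loop from the two Lyapunov statements back to ISS. The implication (v)$\Rightarrow$(i) is immediate from the direct Lyapunov theorem Proposition~\ref{Direct_LT_ISS_maxType}, since $V^{\gamma}$ is coercive.

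The hard part will be (iv)$\Rightarrow$(i), and this is where I expect the main obstacle: one cannot simply invoke a generic converse result, because whether a non-coercive ISS Lyapunov function forces ISS is precisely the question flagged as open in this paper. I would therefore exploit the special structure of $V$. Assuming (iv) forces $V(x)=\int_0^{\infty}\|T_tx\|_X^2\,dt<\infty$ for every $x\in X$, simply because $V$ is asserted to be a finite-valued function on $X$; by Datko's theorem this square-integrability of all orbits is equivalent to exponential stability of $T$ (see, e.g., \cite{Paz83}), so (iv)$\Rightarrow$(iii). Combined with (iii)$\Rightarrow$(ii)$\Rightarrow$(v)$\Rightarrow$(i) this closes the cycle. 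Equivalently, once exponential stability is available, the computation in Proposition~\ref{ConverseLyapunovTheorem_LinearSystems} shows that $V$ satisfies the dissipation inequality \eqref{Final_Lyapunov_Inequality}, and Remark~\ref{rem:Non-coercive_ISS_LFs}, based on \cite{MiW17b}, then gives ISS directly; I would mention this route as the structurally transparent alternative.

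Collecting the implications (i)$\Leftrightarrow$(ii)$\Leftrightarrow$(iii), together with (ii)$\Rightarrow$(iv)$\Rightarrow$(iii) and (ii)$\Rightarrow$(v)$\Rightarrow$(i), establishes the full equivalence. I expect essentially no computation beyond what is already in Propositions~\ref{ConverseLyapunovTheorem_LinearSystems} and \ref{p:Lyapunovfunction2}; the proof is a bookkeeping of implications, and the only conceptual input not contained in the preceding statements is the use of Datko's theorem (or, alternatively, Remark~\ref{rem:Non-coercive_ISS_LFs}) to handle the non-coercive function of (iv), which is exactly the step that circumvents the open problem the paper emphasizes.
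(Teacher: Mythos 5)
Your proposal is correct and takes essentially the same route as the paper's proof, which likewise obtains (ii)$\Rightarrow$(iii)$\Rightarrow$(iv) from Proposition~\ref{ConverseLyapunovTheorem_LinearSystems}, (iv)$\Rightarrow$(iii) from Datko's lemma, (ii)$\Rightarrow$(v) from Proposition~\ref{p:Lyapunovfunction2}, and (v)$\Rightarrow$(i) from Proposition~\ref{Direct_LT_ISS_maxType}; the only cosmetic difference is that the paper derives (i)$\Leftrightarrow$(ii) directly from the variation of constants formula, whereas you recover (ii)$\Rightarrow$(i) through the cycle (ii)$\Rightarrow$(v)$\Rightarrow$(i). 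One small caveat: your mid-proof claim that (i)--(iii) are \emph{mutually} equivalent is premature at the point where you make it (no implication back into (i) is available yet), but this is harmless since your final collection of implications closes the cycle correctly.
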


\begin{proof}
Equivalence between items (i) and (ii) can be easily derived from the variation of constants formula. The implications (ii) $\Rightarrow$ (iii) $\Rightarrow$ (iv) follow from Proposition~\ref{ConverseLyapunovTheorem_LinearSystems}.
Item (iv) implies (iii) due to Datko's Lemma, see \cite[Lemma 5.1.2, Theorem 5.1.3, p. 215]{CuZ95}. Implication (iii) $\Rightarrow$ (ii) is clear.
(ii) implies (v) due to Proposition~\ref{p:Lyapunovfunction2} and (v) implies (i) by Proposition~\ref{Direct_LT_ISS_maxType}.
\end{proof}

\section{Conclusions}
\label{sec:conclusions}

We have shown that input-to-state stability of a nonlinear infinite-dimensional system is equivalent to the existence of a coercive Lipschitz continuous ISS Lyapunov function. For linear systems, we have proposed simpler direct constructions of coercive as well as non-coercive Lipschitz continuous ISS Lyapunov functions.
Whether the existence of a non-coercive ISS Lyapunov function is sufficient for ISS of nonlinear infinite-dimensional systems, remains an open question.

\section*{Acknowledgements}

This research has been supported by the German Research Foundation (DFG) within the project
\href{http://www.fim.uni-passau.de/en/dynamical-systems/forschung/input-to-state-stability-and-stabilization-of-distributed-parameter-systems/}{"Input-to-}
\href{http://www.fim.uni-passau.de/en/dynamical-systems/forschung/input-to-state-stability-and-stabilization-of-distributed-parameter-systems/}{state stability and stabilization of distributed parameter}\\
\href{http://www.fim.uni-passau.de/en/dynamical-systems/forschung/input-to-state-stability-and-stabilization-of-distributed-parameter-systems/}{ systems"} (grant Wi 1458/13-1).
The authors thank Iasson Karafyllis for his helpful comments and suggestions.



\begin{thebibliography}{10}

\bibitem{AVP16}
M.~Ahmadi, G.~Valmorbida, and A.~Papachristodoulou.
\newblock {Dissipation inequalities for the analysis of a class of PDEs}.
\newblock {\em Automatica}, 66:163--171, 2016.

\bibitem{AnP09}
V.~Andrieu and L.~Praly.
\newblock A unifying point of view on output feedback designs for global
  asymptotic stabilization.
\newblock {\em Automatica}, 45(8):1789--1798, 2009.

\bibitem{CaH98}
T.~Cazenave and A.~Haraux.
\newblock {\em {An Introduction To Semilinear Evolution Equations}}.
\newblock Oxford University Press, New York, 1998.

\bibitem{CuZ95}
R.~F. Curtain and H.~Zwart.
\newblock {\em An Introduction to Infinite-Dimensional Linear Systems Theory}.
\newblock Springer-Verlag, New York, 1995.

\bibitem{DaM13}
S.~Dashkovskiy and A.~Mironchenko.
\newblock Input-to-state stability of infinite-dimensional control systems.
\newblock {\em Mathematics of Control, Signals, and Systems}, 25(1):1--35,
  2013.

\bibitem{DRW10}
S.~Dashkovskiy, B.~R\"{u}ffer, and F.~Wirth.
\newblock {Small gain theorems for large scale systems and construction of ISS
  Lyapunov functions}.
\newblock {\em SIAM Journal on Control and Optimization}, 48(6):4089--4118,
  2010.

\bibitem{FrK08}
R.~A. Freeman and P.~V. Kokotovi{\'{c}}.
\newblock {\em {Robust Nonlinear Control Design: State-Space and Lyapunov
  Techniques}}.
\newblock Birkh\"{a}user, Boston, MA, 2008.

\bibitem{God75}
A.~Godunov.
\newblock {Peano's theorem in Banach spaces}.
\newblock {\em Functional Analysis and Its Applications}, 9(1):53--55, 1975.

\bibitem{HaJ10}
P.~H{\'a}jek and M.~Johanis.
\newblock {On Peano's theorem in Banach spaces}.
\newblock {\em Journal of Differential Equations}, 249(12):3342--3351, 2010.

\bibitem{JLR08}
B.~Jayawardhana, H.~Logemann, and E.~P. Ryan.
\newblock {Infinite-dimensional feedback systems: the circle criterion and
  input-to-state stability}.
\newblock {\em Communications in Information and Systems}, 8(4):413--444, 2008.

\bibitem{JTP94}
Z.-P. Jiang, A.~R. Teel, and L.~Praly.
\newblock {Small-gain theorem for ISS systems and applications}.
\newblock {\em Mathematics of Control, Signals, and Systems}, 7(2):95--120,
  1994.

\bibitem{KaJ11}
I.~Karafyllis and Z.-P. Jiang.
\newblock {A vector small-gain theorem for general non-linear control systems}.
\newblock {\em IMA Journal of Mathematical Control and Information},
  28:309--344, 2011.

\bibitem{KaJ11b}
I.~Karafyllis and Z.-P. Jiang.
\newblock {\em {Stability and stabilization of nonlinear systems}}.
\newblock Communications and Control Engineering Series. Springer-Verlag,
  London, 2011.

\bibitem{KaK16}
I.~Karafyllis and M.~Krstic.
\newblock {ISS} in different norms for {1-D} parabolic {PDE}s with boundary
  disturbances.
\newblock {\em arXiv preprint arXiv:1605.01364}, 2016.

\bibitem{KaK16b}
I.~Karafyllis and M.~Krstic.
\newblock {ISS} with respect to boundary disturbances for {1-D} parabolic
  {PDEs}.
\newblock {\em IEEE Transactions on Automatic Control}, 61(12):3712--3724,
  2016.

\bibitem{KoA01}
P.~Kokotovi\'{c} and M.~Arcak.
\newblock {Constructive nonlinear control: a historical perspective}.
\newblock {\em Automatica}, 37(5):637--662, 2001.

\bibitem{LSW96}
Y.~Lin, E.~D. Sontag, and Y.~Wang.
\newblock A smooth converse {L}yapunov theorem for robust stability.
\newblock {\em SIAM Journal on Control and Optimization}, 34(1):124--160, 1996.

\bibitem{MaP11}
F.~Mazenc and C.~Prieur.
\newblock {Strict Lyapunov functions for semilinear parabolic partial
  differential equations}.
\newblock {\em Mathematical Control and Related Fields}, 1(2):231--250, 2011.

\bibitem{Mir16}
A.~Mironchenko.
\newblock Local input-to-state stability: {C}haracterizations and
  counterexamples.
\newblock {\em Systems \& Control Letters}, 87:23--28, 2016.

\bibitem{MiI15b}
A.~Mironchenko and H.~Ito.
\newblock Construction of {L}yapunov functions for interconnected parabolic
  systems: {A}n {iISS} approach.
\newblock {\em SIAM Journal on Control and Optimization}, 53(6):3364--3382,
  2015.

\bibitem{MiI16}
A.~Mironchenko and H.~Ito.
\newblock Characterizations of integral input-to-state stability for bilinear
  systems in infinite dimensions.
\newblock {\em Mathematical Control and Related Fields}, 6(3):447--466, 2016.

\bibitem{MiW15}
A.~Mironchenko and F.~Wirth.
\newblock {A note on input-to-state stability of linear and bilinear
  infinite-dimensional systems.}
\newblock In {\em {Proc. of the 54th IEEE Conference on Decision and Control}},
  pages 495--500, 2015.

\bibitem{MiW16b}
A.~Mironchenko and F.~Wirth.
\newblock Global converse {L}yapunov theorems for infinite-dimensional systems.
\newblock In {\em Proc. of the 10th IFAC Symposium on Nonlinear Control Systems
  (NOLCOS 2016)}, pages 909--914, 2016.

\bibitem{MiW16c}
A.~Mironchenko and F.~Wirth.
\newblock Non-coercive {L}yapunov functions for infinite-dimensional systems.
\newblock {\em arxiv.org/abs/1612.06575}, 2016.
\newblock submitted.

\bibitem{MiW17b}
A.~Mironchenko and F.~Wirth.
\newblock Characterizations of input-to-state stability for
  infinite-dimensional systems.
\newblock {\em Provisionally accepted to IEEE Transactions on Automatic
  Control}, 2017.
\newblock arxiv.org/abs/1701.08952.

\bibitem{Paz83}
A.~Pazy.
\newblock {\em Semigroups of Linear Operators and Applications to Partial
  Differential Equations}.
\newblock Springer-Verlag, New York, 1983.

\bibitem{PrM12}
C.~Prieur and F.~Mazenc.
\newblock {ISS-Lyapunov functions for time-varying hyperbolic systems of
  balance laws}.
\newblock {\em Mathematics of Control, Signals, and Systems}, 24(1-2):111--134,
  2012.

\bibitem{Son89}
E.~D. Sontag.
\newblock Smooth stabilization implies coprime factorization.
\newblock {\em IEEE Transactions on Automatic Control}, 34(4):435--443, 1989.

\bibitem{SoW95}
E.~D. Sontag and Y.~Wang.
\newblock {On characterizations of the input-to-state stability property}.
\newblock {\em Systems \& Control Letters}, 24(5):351--359, 1995.

\bibitem{SoW96}
E.~D. Sontag and Y.~Wang.
\newblock {New characterizations of input-to-state stability}.
\newblock {\em IEEE Transactions on Automatic Control}, 41(9):1283--1294, 1996.

\end{thebibliography}

\end{document}